\newtheorem{theorem}{Theorem}[section]
\newtheorem{lemma}[theorem]{Lemma}
\theoremstyle{definition}
\numberwithin{equation}{section}
\numberwithin{figure}{section}
\renewcommand{\geq}{\geqslant}
\renewcommand{\leq}{\leqslant}
\renewcommand\section{\@startsection {section}{1}{\z@}%
                                   {-3.5ex \@plus -1ex \@minus -.2ex}%
                                   {1.3ex \@plus.2ex}%
                                   {\normalfont\large\scshape}}
\title{ \vspace{-5ex}\bf \Large Repeated compositions of M\"obius transformations}
\author{Matthew Jacques and Ian Short\footnote{Department of Mathematics and Statistics, The Open University, Milton Keynes, MK7 6AA, United Kingdom 28~April~2015}}
\date{\vspace{-5ex}}
\begin{document}

\maketitle

\begin{abstract}
This paper considers a class of dynamical systems generated by finite sets of M\"obius transformations acting on the unit disc. Compositions of such M\"obius transformations give rise to sequences of transformations that are used in the theory of continued fractions. In that theory, the distinction between sequences of limit-point type and sequences of limit-disc type is of central importance. We prove that sequences of limit-disc type only arise in particular circumstances, and we give necessary and sufficient conditions for a sequence to be of limit-disc type. We also calculate the Hausdorff dimension of the set of sequences of limit-disc type in some significant cases. Finally, we obtain strong and complete results on the convergence of these dynamical systems.
\end{abstract}

\section{Introduction}

The object of this paper is to give a precise description of a class of nonautonomous dynamical systems generated by M\"obius transformations. The motivation for our study comes from discrete dynamical systems, where one is concerned with the iterates of a map $f:X\to X$. In our more general context, given any collection $\mathcal{F}$ of self maps of $X$, we define a  \emph{composition sequence} generated by $\mathcal{F}$ to be a sequence $(F_n)$, where $F_n=f_1f_2\dotsb f_n$ and $f_i\in \mathcal{F}$. Composition sequences generated by sets of analytic self maps of complex domains have received much attention; see, for example, \cites{BaRi1990,BeCaMiNg2004,KeLa2006,Lo1990}. There has been particular focus on generating sets composed of M\"obius transformations that map a disc within itself -- see \cites{Ae1990,Be2001,Be2003,JaSh2016,Lo2007,LoWa2008} -- partly because of applications to the theory of continued fractions.  Here we give a detailed study of composition sequences generated by  \emph{finite} sets of M\"obius transformations, which goes far beyond the existing literature in completeness and precision. Furthermore, the geometric approach we take generalises easily to higher dimensions (for simplicity, however, we  present all our results in two dimensions).

 We understand a M\"obius transformation to be a map of the form $z\mapsto (az+b)/(cz+d)$, where $a$, $b$, $c$ and $d$ are complex numbers that satisfy $ad-bc\neq 0$. These transformations act on the extended complex plane $\overline{\mathbb{C}}=\mathbb{C}\cup\{\infty\}$ (with the usual conventions about the point $\infty$). Our focus is on composition sequences generated by subsets of the class $\mathcal{M}(\mathbb{D})$ of those M\"obius transformations that map the unit disc $\mathbb{D}$ strictly inside itself. By conjugation, the results that follow  remain valid if we replace $\mathbb{D}$ with any other disc (by \emph{disc} we mean a disc of positive radius in $\overline{\mathbb{C}}$ using the chordal metric, other than $\overline{\mathbb{C}}$ itself). Let us say that a composition sequence is \emph{finitely generated} if the corresponding generating set is finite.
 
If $(F_n)$ is a composition sequence generated by a subset of $\mathcal{M}(\mathbb{D})$, then clearly the discs
\[
\mathbb{D} \supset F_1(\mathbb{D}) \supset F_2(\mathbb{D}) \supset \dotsb
\]
are nested, and the intersection $\bigcap F_n(\overline{\mathbb{D}})$ is either a single point or a closed disc. In the first case we say that the composition sequence is of \emph{limit-point type}, and in the second case we say that it is of \emph{limit-disc type}. The distinction between limit-point type and limit-disc type is of central importance in continued fraction theory because composition sequences of limit-point type have strong convergence properties. We will see that for composition sequences generated by finite subsets of $\mathcal{M}(\mathbb{D})$, limit-disc type really is quite special, and we supply necessary and sufficient conditions for limit-disc type to occur. 

To state our first result we introduce a new concept: a composition sequence $(F_n)$ generated by a subset of $\mathcal{M}(\mathbb{D})$ is said to be of \emph{limit-tangent type} if  all but a finite number of discs from the sequence~$\mathbb{D} \supset F_1(\mathbb{D}) \supset F_2(\mathbb{D}) \supset\dotsb$ share a single common boundary point. 

\begin{theorem}\label{dgi}
Let $\mathcal{F}$ be a finite subcollection of $\mathcal{M}(\mathbb{D})$. Any composition sequence generated by~$\mathcal{F}$ of limit-disc type is of limit-tangent type.
\end{theorem}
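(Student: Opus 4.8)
The plan is to show that a limit-disc composition sequence $(F_n)$ generated by $\mathcal F$ must be of limit-tangent type; the route is to prove that otherwise the Euclidean diameters $\operatorname{diam}F_n(\overline{\mathbb D})$ tend to $0$, so that $(F_n)$ is of limit-point type. Classify $f\in\mathcal M(\mathbb D)$ as \emph{interior} if $f(\overline{\mathbb D})$ lies compactly inside $\mathbb D$ and as \emph{boundary} if $f(\overline{\mathbb D})$ is internally tangent to $\partial\mathbb D$ at a point $\tau(f)$; since $f(\overline{\mathbb D})$ is a closed disc properly contained in $\overline{\mathbb D}$, exactly one of these holds. For a boundary map $f$ set $\sigma(f)=f^{-1}(\tau(f))\in\partial\mathbb D$; then, because $f(\overline{\mathbb D})\cap\partial\mathbb D=\{\tau(f)\}$, for every $z\in\overline{\mathbb D}$ we have $f(z)\in\partial\mathbb D$ exactly when $z=\sigma(f)$.

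The first ingredient is a geometric lemma: for boundary maps $g_1,g_2$ the composition $g_1g_2$ is interior precisely when $\sigma(g_1)\neq\tau(g_2)$. Indeed $g_1g_2(\overline{\mathbb D})$ meets $\partial\mathbb D$ iff $g_2(z)=\sigma(g_1)$ for some $z\in\overline{\mathbb D}$, iff $\sigma(g_1)\in g_2(\overline{\mathbb D})$; and as $\sigma(g_1)\in\partial\mathbb D$ while $g_2(\overline{\mathbb D})\cap\partial\mathbb D=\{\tau(g_2)\}$, this holds exactly when $\sigma(g_1)=\tau(g_2)$, in which case $g_1g_2(\overline{\mathbb D})$ meets $\partial\mathbb D$ only at $\tau(g_1)$, while otherwise $g_1g_2(\overline{\mathbb D})$ is a closed disc inside $\mathbb D$. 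Consequently the generating word $f_1f_2\dotsb$ contains a short \emph{interior block} --- a run $f_j\dotsb f_{j+k-1}$ of length $k\leq2$ whose composition is interior --- whenever some $f_j$ is interior, or some consecutive pair of boundary maps $f_j,f_{j+1}$ has $\sigma(f_j)\neq\tau(f_{j+1})$.

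The main step is a contraction estimate. An interior M\"obius map carrying $\overline{\mathbb D}$ into $\{|z|\leq r\}$ with $r<1$ contracts the hyperbolic metric of $\mathbb D$ by a factor at most $r$; and since $\mathcal F$ is finite there are only finitely many interior blocks of length at most $2$, so there are constants $c<1$ and $\Delta<\infty$ bounding, respectively, all their contraction factors and all the hyperbolic diameters of their images of $\overline{\mathbb D}$. If $f_1f_2\dotsb$ contained infinitely many pairwise disjoint short interior blocks $\beta_1,\beta_2,\dots$ (in order of position), then for large $n$ we could write $F_n=P\beta_1M_1\beta_2\dotsb M_{L-1}\beta_LS$, with the $M_i$ the intervening words and $P,S$ a prefix and suffix; evaluating from the inside out, $\beta_LS(\overline{\mathbb D})\subseteq\beta_L(\overline{\mathbb D})$ has hyperbolic diameter at most $\Delta$, each subsequent $M_i$ is a weak hyperbolic contraction, each subsequent $\beta_i$ contracts by at most $c$, and $P$ is a weak contraction, so the hyperbolic diameter of $F_n(\overline{\mathbb D})$ is at most $c^{L-1}\Delta$ with $L=L(n)\to\infty$. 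As small hyperbolic diameter forces small Euclidean diameter, this gives $\operatorname{diam}F_n(\overline{\mathbb D})\to0$, hence limit-point type. I expect this estimate to be the main obstacle: an individual boundary map is only a \emph{weak} hyperbolic contraction, so merely counting generators achieves nothing, and one has to isolate the genuinely contracting ``interior events'' and exploit the finiteness of $\mathcal F$ to make them uniformly contracting.

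Finally I assemble the pieces. If $(F_n)$ is of limit-disc type, the estimate rules out infinitely many pairwise disjoint short interior blocks, so --- otherwise such blocks at arbitrarily late positions would yield a disjoint infinite family --- there is an $N$ beyond which no short interior block begins. Hence for every $j\geq N$ the map $f_j$ is boundary, and $f_jf_{j+1}$ is not interior, so $\sigma(f_j)=\tau(f_{j+1})$, i.e.\ $f_j\bigl(\tau(f_{j+1})\bigr)=\tau(f_j)$. Writing $y_j=\tau(f_j)\in\partial\mathbb D$ for $j\geq N$, we get $f_j(y_{j+1})=y_j$, so $F_n(y_{n+1})=F_{n-1}\bigl(f_n(y_{n+1})\bigr)=F_{n-1}(y_n)$ for $n\geq N$; telescoping, $F_n(y_{n+1})=F_{N-1}(y_N)=:p$ for all $n\geq N$, while $p=F_n(y_{n+1})\in F_n(\partial\mathbb D)=\partial F_n(\mathbb D)$. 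Thus all but finitely many of the discs $F_n(\mathbb D)$ share the boundary point $p$, which is precisely the statement that $(F_n)$ is of limit-tangent type.
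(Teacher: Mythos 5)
Your proof is correct, and its combinatorial skeleton is the same as the paper's: both argue via the contrapositive, isolate the consecutive pairs $f_jf_{j+1}$ whose composition maps $\overline{\mathbb{D}}$ compactly into $\mathbb{D}$ as the decisive events, and use the finiteness of $\mathcal{F}$ to make those events uniformly contracting. The two supporting ingredients, however, are genuinely different. First, the paper passes between ``limit-tangent type'' and ``$F_n(\mathbb{D})$ tangent to $F_{n+2}(\mathbb{D})$ for all large $n$'' via an elementary lemma on nested Euclidean discs (Lemma~\ref{emr}); you instead recover the common boundary point by propagating the explicit tangency points of the generators, $f_j(\tau(f_{j+1}))=\tau(f_j)$, and telescoping $F_n(y_{n+1})=F_{N-1}(y_N)$ --- more hands-on, and closer to how the paper itself exploits tangency later, in Section~3. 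Second, where the paper quotes Lorentzen's theorem (Theorem~\ref{bia}) to conclude that a composition sequence whose blocks land in a fixed compact $K\subset\mathbb{D}$ converges to a constant, you give a self-contained Schwarz--Pick argument: a holomorphic self-map of $\mathbb{D}$ with image in $\{|z|\leq r\}$, $r<1$, contracts the hyperbolic metric of $\mathbb{D}$ by a factor at most $r$ (correct: compare the densities of $\lambda_{\mathbb{D}}$ and $\lambda_{D_{r'}}$ for $r<r'<1$), so interleaving weak contractions with $L$ uniform contractions bounds the hyperbolic diameter of $F_n(\overline{\mathbb{D}})$ by $c^{L-1}\Delta$, hence forces the Euclidean diameter to $0$. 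Your route is longer but entirely elementary and yields an explicit geometric contraction rate; the paper's is shorter at the price of importing an external convergence theorem. Both places where you invoke the finiteness of $\mathcal{F}$ --- the uniform bound $\Delta$ on the hyperbolic diameters of the images of the interior blocks and the uniform factor $c<1$ --- are exactly where the hypothesis must enter, and your greedy extraction of an infinite disjoint family of short interior blocks from blocks beginning at arbitrarily late positions is sound.
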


That a composition sequence $(F_n)$ is of limit-tangent type implies that there is a point $p$ and a sequence $(z_n)$ in $\partial \mathbb{D}$ such that $p=F_n(z_n)$ for sufficiently large values of $n$. It follows that $f_n(z_n)=z_{n-1}$ for large $n$. We know that $f_n(\mathbb{D})\subset \mathbb{D}$ and $f_n(\mathbb{D})\neq \mathbb{D}$, so we deduce that $f_n(\mathbb{D})$ is internally tangent to $\mathbb{D}$ at the point $z_{n-1}$. Let us now choose any element $f$ of $\mathcal{M}(\mathbb{D})$; the disc $f(\mathbb{D})$ is either internally tangent to $\mathbb{D}$ at a unique point, or else the boundaries of $f(\mathbb{D})$ and $\mathbb{D}$ do not meet. In the former case, we define $\alpha_f$ and $\beta_f$ to be the unique points in $\partial \mathbb{D}$ such that $f(\alpha_f)=\beta_f$, and in the latter case, we define $\alpha_f=0$ and $\beta_f=\infty$ (for reasons of convenience to emerge shortly). The preceding theorem tells us that in order for the  sequence $(F_n)$ to be of limit-disc type, we need $\alpha_{f_{n-1}}=\beta_{f_n}$ for sufficiently large values of $n$. How likely one is to find such a composition sequence is best illustrated by means of a directed graph $T(\mathcal{F})$, which we call the \emph{tangency graph} of $\mathcal{F}$, and which is defined as follows. The vertices of $T(\mathcal{F})$ are the elements of $\mathcal{F}$. There is a directed edge from vertex $f$ to vertex $g$ if $\alpha_f=\beta_g$. Clearly, any vertex $f$ for which $f(\mathbb{D})$ and $\mathbb{D}$ are not internally tangent is an isolated vertex.

Let us consider an example. One can easily check that a M\"obius transformation $f(z)=a/(b+z)$, where $a\neq 0$, has the property that $f(\mathbb{D})$ is contained within and internally tangent to~$\mathbb{D}$ if and only if $|b|=1+|a|$. Let 
\[
g(z)=\frac{\tfrac12}{\tfrac32+z},\quad  h(z)=\frac{\tfrac12}{-\tfrac32+z},\quad\text{and}\quad k(z)=\frac{-\tfrac12}{-\tfrac32+z}.
\]
Each of these maps satisfies the condition $|b|=1+|a|$. Observe that $g(-1)=1$, $h(1)=-1$ and $k(1)=1$, which implies that $\alpha_g=-1$, $\beta_g=1$, $\alpha_h=1$, and so forth. The tangency graph of $\{g,h,k\}$ is shown in Figure~\ref{poa}.

\pgfmathsetmacro{\L}{2}
\pgfmathsetmacro{\yL}{{\L*sqrt(3)}}
\pgfmathsetmacro{\sgap}{0.3cm}
\pgfmathsetmacro{\gap}{0.4}
\pgfmathsetmacro{\xgap}{0.5*\gap}
\pgfmathsetmacro{\ygap}{{\xgap*sqrt(3)}}

\begin{figure}[ht]
\centering
\includegraphics[scale=0.9]{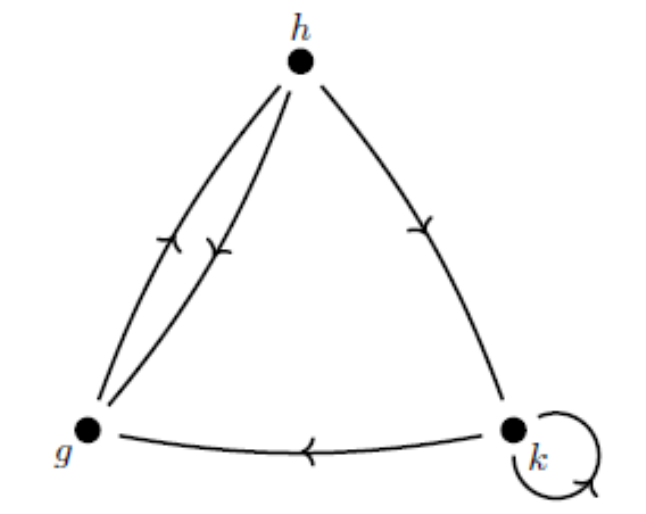}
\caption{Tangency graph of $\{g,h,k\}$}
\label{poa}
\end{figure}

From the tangency graph we can see that a composition sequence $F_n=f_1\dotsb f_n$ generated by~$\{g,h,k\}$ is \emph{not} of limit-tangent type if and only if $(f_n,f_{n+1})$ is equal to either~$(g,g)$,~$(h,h)$,~$(g,k)$, or~$(k,h)$ for infinitely many positive integers $n$. 

We have shown that in order for a composition sequence $(F_n)$ to be of limit-disc type, the sequence of vertices $(f_n)$ in the tangency graph must eventually form an infinite path. The converse fails though: not all infinite paths in the tangency graph correspond to sequences of limit-disc type. To determine which paths arise from composition sequences of limit-disc type, we need to look at the derivatives of the maps $f_n$ at tangency points. Let $\gamma_f = 1/|f'(\alpha_f)|$, where $f \in\mathcal{M}(\mathbb{D})$. The next theorem gives necessary and sufficient conditions for a composition sequence to be of limit-disc type.

\begin{theorem}\label{acu}
Let $\mathcal{F}$ be a finite subcollection of $\mathcal{M}(\mathbb{D})$. A composition sequence $F_n=f_1\dotsb f_n$ generated by  $\mathcal{F}$ is of limit-disc type if and only if 
\begin{enumerate}[label=\emph{(\roman*)},leftmargin=20pt,topsep=0pt,itemsep=0pt]
\item $\alpha_{f_{n-1}}=\beta_{f_n}$ for all but finitely many positive integers $n$, and
\item $\displaystyle\sum_{n=1}^\infty \gamma_{f_1}\dotsb \gamma_{f_n} < +\infty$.
\end{enumerate}
\end{theorem}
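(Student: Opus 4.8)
The plan is as follows. The necessity of condition~(i) comes directly from Theorem~\ref{dgi} and the remarks immediately after it, so the substance of the proof is to show that, \emph{assuming~(i) holds}, the composition sequence is of limit-disc type if and only if~(ii) holds. I would first reduce to the case in which $\alpha_{f_{n-1}}=\beta_{f_n}$ for \emph{every} $n\geq2$. Indeed, replacing $(F_n)$ by the tail composition sequence $\bigl(f_{N+1}\dotsb f_{N+n}\bigr)_{n\geq1}$ changes $F_{N+n}$ only by pre-composition with the fixed M\"obius homeomorphism $f_1\dotsb f_N$ of $\overline{\mathbb{C}}$; this affects neither whether the sequence is of limit-disc type nor (the partial sums change only by the fixed factor $\gamma_{f_1}\dotsb\gamma_{f_N}$) the convergence in~(ii). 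A short argument from the definitions of $\alpha_f$ and $\beta_f$ shows that once~(i) holds for all $n\geq2$, every $f_n(\mathbb{D})$ is internally tangent to $\mathbb{D}$.

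Next I would fix coordinates. Put $p=\beta_{f_1}$. An induction on $n$, using that $f_k$ sends $\alpha_{f_k}$ to $\beta_{f_k}$ together with $\beta_{f_k}=\alpha_{f_{k-1}}$ from~(i), shows simultaneously that $F_n(\alpha_{f_n})=p$ and that every disc $F_n(\mathbb{D})$ is internally tangent to $\mathbb{D}$ at the single point~$p$. Conjugating by a M\"obius transformation $\phi$ with $\phi(\mathbb{D})=\mathbb{H}$, the upper half-plane, and $\phi(p)=\infty$, the images $\phi(F_n(\mathbb{D}))$ form a nested chain of half-planes $\{\operatorname{Im}z>c_n\}$ with $0<c_1\leq c_2\leq\dotsb$, and the composition sequence is of limit-disc type precisely when the increasing sequence $(c_n)$ is bounded. (Only finitely many models $\phi$ are needed, one for each of the finitely many possible values of $p$.)

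The heart of the argument is an estimate for the increment $c_n-c_{n-1}$. The transformation $\phi F_{n-1}\phi^{-1}$ carries $\mathbb{H}$ onto $\{\operatorname{Im}z>c_{n-1}\}$ and sends $r_{n-1}:=\phi(\alpha_{f_{n-1}})$ to $\infty$, because $F_{n-1}(\alpha_{f_{n-1}})=p$; meanwhile $\phi(f_n(\mathbb{D}))$ is a disc of some diameter $\delta_n$ internally tangent to $\mathbb{R}$ at $r_{n-1}$. Writing $\phi F_{n-1}\phi^{-1}$ as $z\mapsto C_{n-1}-a_{n-1}/(z-r_{n-1})$ with $a_{n-1}>0$ and $\operatorname{Im}C_{n-1}=c_{n-1}$ (the degenerate case $r_{n-1}=\infty$, where the map is affine, being handled in parallel) and applying it to this disc, a direct computation gives $c_n=c_{n-1}+a_{n-1}/\delta_n$. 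To identify $a_{n-1}$ I would use the chain rule: writing $F_{n-1}=f_1\dotsb f_{n-1}$ and using repeatedly that $f_k$ maps $\alpha_{f_k}$ to $\beta_{f_k}$ and that $\beta_{f_k}=\alpha_{f_{k-1}}$ by~(i), one sees that the factors in $F_{n-1}'(\alpha_{f_{n-1}})=\prod_{k=1}^{n-1}f_k'(\cdot)$ are evaluated exactly at the points $\alpha_{f_k}$, so $|F_{n-1}'(\alpha_{f_{n-1}})|=\prod_{k=1}^{n-1}|f_k'(\alpha_{f_k})|=1/(\gamma_{f_1}\dotsb\gamma_{f_{n-1}})$. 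Transferring the residue computation through $\phi$ and $\phi^{-1}$---noting that although $\phi$ has a pole at $p$ the map $z\mapsto1/\phi(z)$ is a M\"obius transformation regular there, while $\phi,\phi^{-1}$ are regular with nonzero derivatives at the remaining points involved---one finds that $a_{n-1}$ equals $\gamma_{f_1}\dotsb\gamma_{f_{n-1}}$ times a factor depending only on $\phi$ and on $\alpha_{f_{n-1}}$, and hence lying between two positive constants that depend only on $\mathcal{F}$.

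Finally, because $f_n$ ranges over the finite set $\mathcal{F}$ and $\phi$ over a finite list, the quantities $\delta_n$, $\gamma_{f_n}$ and all the $\phi$-dependent factors lie between positive constants depending only on $\mathcal{F}$; hence $c_n-c_{n-1}$ is comparable to $\gamma_{f_1}\dotsb\gamma_{f_n}$, and $c_1$ is comparable to $1$, so $(c_n)$ is bounded if and only if $\sum_{n\geq1}\gamma_{f_1}\dotsb\gamma_{f_n}<+\infty$. Combined with the reductions of the first paragraph, this is exactly the statement of the theorem. I expect the main obstacle to be the third step above: putting the recursion for $c_n$ in clean additive form and matching its coefficient $a_{n-1}$ with the partial product $\gamma_{f_1}\dotsb\gamma_{f_{n-1}}$. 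Making the chain rule telescope uses condition~(i) at every stage, and one has to keep careful track of derivatives near the point $p$, where the conjugating map $\phi$ is singular.
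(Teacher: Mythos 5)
Your argument is correct and is essentially the paper's own: both reduce via Theorem~\ref{dgi} to the eventually-tangent case, transfer to a half-plane model in which the common tangency point sits at $\infty$, and identify the displacement of the nested half-planes with the partial sums of $\gamma_{f_1}\dotsb\gamma_{f_n}$ by telescoping the chain rule along the tangency points $\alpha_{f_k}$. The only difference is one of bookkeeping: the paper conjugates each $f_n$ by stage-dependent maps $\phi_{n-1},\phi_n$ (each sending the current tangency point to $\infty$), so that every conjugated factor is affine on the half-plane $\mathbb{K}$ and the increment can be read off directly (Lemmas~\ref{pbe} and~\ref{abc}), whereas you conjugate once and extract the coefficient $a_{n-1}$ from the pole of $\phi F_{n-1}\phi^{-1}$ at $r_{n-1}$ — the same computation in a single chart.
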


For the example illustrated in Figure~\ref{poa}, one can check that $\gamma_g=\gamma_h=\gamma_k=\tfrac12$. It follows that any composition sequence generated by $\{g,h,k\}$ satisfies condition~(ii) of Theorem~\ref{acu}, so such a sequence is of limit-disc type if and only if it is of limit-tangent type.

Next we turn to the following question: given a finite subcollection $\mathcal{F}$ of $\mathcal{M}(\mathbb{D})$, how many of the composition sequences generated by $\mathcal{F}$ are of limit-disc type? Using the tangency graph we can formalise this question, and answer it, at least in part, using known techniques. More precisely, we will define a metric on the set $\Omega(\mathcal{F})$ of all infinite sequences $(f_n)$ with $f_n\in\mathcal{F}$, and evaluate the Hausdorff dimension of the subset $\Lambda(\mathcal{F})$  composed of sequences $(f_n)$ for which the corresponding composition sequences $F_n=f_1\dotsb f_n$ are of limit-disc type. The value of the Hausdorff dimension will depend on the metric we choose; a standard choice for a metric on the set of sequences from a finite alphabet (which we adopt) is as follows. Given sequences $(f_n)$ and $(g_n)$ in $\mathcal{F}$, we define 
\[
d\big((f_n),(g_n)\big)= \frac{1}{|\mathcal{F}|^k},\quad\text{where}\quad k= \min\{ i\in\mathbb{N}\,:\, f_i\neq g_i\}.
\]
Then $(\Omega(\mathcal{F}),d)$ is a compact metric space. 

A \emph{cycle} in a directed graph is a sequence $x_0,\dots,x_n$ of vertices in the graph such that $x_0=x_n$ and there is a directed edge from $x_{i-1}$ to $x_i$, for $i=1,\dots,n$.  Let $\rho(\mathcal{F})$ denote the spectral radius of the adjacency matrix of $T(\mathcal{F})$. We can now state our first theorem on Hausdorff dimension.

\begin{theorem}\label{can}
Let $\mathcal{F}$ be a finite subcollection of $\mathcal{M}(\mathbb{D})$, and suppose that $\gamma_f<1$ for  each $f\in\mathcal{F}$. If~$\mathcal{F}$ contains a cycle, then 
\[
\textnormal{dim}\,\Lambda(\mathcal{F})=\dfrac{\log \rho(\mathcal{F})}{\log |\mathcal{F}|},
\]
and otherwise $\textnormal{dim}\,\Lambda(\mathcal{F})=0$.
\end{theorem}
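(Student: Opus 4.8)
The plan is to combine Theorem~\ref{acu} with the standard computation of the Hausdorff dimension of a subshift of finite type. The first point is that the hypothesis $\gamma_f<1$ for every $f\in\mathcal{F}$ renders condition~(ii) of Theorem~\ref{acu} automatic: setting $\gamma=\max_{f\in\mathcal{F}}\gamma_f<1$, every sequence satisfies $\gamma_{f_1}\dotsb\gamma_{f_n}\le\gamma^n$, so $\sum_{n}\gamma_{f_1}\dotsb\gamma_{f_n}\le\gamma/(1-\gamma)<+\infty$. Therefore $\Lambda(\mathcal{F})$ is exactly the set of $(f_n)\in\Omega(\mathcal{F})$ satisfying condition~(i); equivalently, the set of sequences some tail of which is an infinite directed path in the tangency graph $T(\mathcal{F})$.

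Next I would reduce to the space of infinite paths. Let $\Pi(\mathcal{F})\subseteq\Omega(\mathcal{F})$ be the set of sequences with $\alpha_{f_n}=\beta_{f_{n+1}}$ for all $n$, that is, the set of infinite directed paths in $T(\mathcal{F})$; this is a subshift of finite type whose transition matrix is the adjacency matrix $A$ of $T(\mathcal{F})$. Writing $\sigma$ for the shift, the previous paragraph gives $\Lambda(\mathcal{F})=\bigcup_{N\ge0}\sigma^{-N}\big(\Pi(\mathcal{F})\big)$. For each $N$, the set $\sigma^{-N}(\Pi(\mathcal{F}))$ is the disjoint union, over the $|\mathcal{F}|^{N}$ words $w$ of length $N$, of the copies of $\Pi(\mathcal{F})$ obtained by prepending $w$; since prepending a fixed word of length $N$ multiplies every distance by $|\mathcal{F}|^{-N}$, it is a similarity, so each of these sets has the same Hausdorff dimension as $\Pi(\mathcal{F})$. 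By countable stability of Hausdorff dimension, $\dim\Lambda(\mathcal{F})=\dim\Pi(\mathcal{F})$. If $\mathcal{F}$ contains no cycle then $T(\mathcal{F})$ is acyclic, hence has no infinite path, so $\Pi(\mathcal{F})=\emptyset$ and $\dim\Lambda(\mathcal{F})=0$. So assume from now on that $T(\mathcal{F})$ contains a cycle, whence $\rho:=\rho(\mathcal{F})\ge1$.

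It remains to prove $\dim\Pi(\mathcal{F})=\log\rho/\log|\mathcal{F}|$. For the upper bound, I would cover $\Pi(\mathcal{F})$ by the cylinders specified by the first $n$ coordinates: each has diameter $|\mathcal{F}|^{-n}$, and the number of nonempty ones is at most $\mathbf 1^{\top}A^{n-1}\mathbf 1$, the number of directed paths with $n$ vertices in $T(\mathcal{F})$; by Gelfand's formula this is at most $C_\varepsilon(\rho+\varepsilon)^{n}$ for every $\varepsilon>0$. Hence for $s>\log(\rho+\varepsilon)/\log|\mathcal{F}|$ we have $\sum(\mathrm{diam})^{s}\le C_\varepsilon\big((\rho+\varepsilon)|\mathcal{F}|^{-s}\big)^{n}\to0$ as $n\to\infty$, so the $s$-dimensional Hausdorff measure of $\Pi(\mathcal{F})$ is zero; letting $\varepsilon\to0$ gives $\dim\Pi(\mathcal{F})\le\log\rho/\log|\mathcal{F}|$. (When $\rho=1$ this already forces $\dim\Lambda(\mathcal{F})=0$, as claimed, so for the lower bound one may assume $\rho>1$.) For the lower bound, I would choose a strongly connected subgraph $H$ of $T(\mathcal{F})$ with irreducible adjacency matrix $A_H$ and $\rho(A_H)=\rho$ — possible because $\rho(\mathcal{F})$ equals the largest of the spectral radii of the strongly connected components of $T(\mathcal{F})$. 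Perron--Frobenius then provides positive left and right eigenvectors $u,v$ of $A_H$ for the eigenvalue $\rho$, and the associated Parry measure $\mu$ on $\Pi(H)\subseteq\Pi(\mathcal{F})$ assigns the length-$n$ cylinder along a path $i_1,\dots,i_n$ the mass $c\,u_{i_1}v_{i_n}\rho^{-(n-1)}$. As the entries of $u$ and $v$ lie between fixed positive constants, this mass is comparable to $\rho^{-n}=(|\mathcal{F}|^{-n})^{s}$ with $s=\log\rho/\log|\mathcal{F}|$; since balls in the ultrametric space $(\Omega(\mathcal{F}),d)$ are cylinders, the mass distribution principle yields $\dim\Pi(\mathcal{F})\ge\dim(\operatorname{supp}\mu)\ge s$. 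Combining the two bounds gives $\dim\Lambda(\mathcal{F})=\dim\Pi(\mathcal{F})=\log\rho(\mathcal{F})/\log|\mathcal{F}|$.

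I expect the main obstacle to be the lower bound, and specifically the reducibility of $A$: the tangency graph need not be strongly connected, so Perron--Frobenius cannot be applied to $A$ itself, and one must first isolate a strongly connected component realising the spectral radius and carry out the Parry measure construction there. The remaining ingredients — the two-sided comparison $\mu(\text{cylinder})\asymp\rho^{-n}$ and its passage to the hypothesis of the mass distribution principle — are routine. Alternatively, once the problem is reduced to computing $\dim\Pi(\mathcal{F})$, one can simply invoke the known identity that a subshift of finite type over an alphabet of size $m$, equipped with the metric used here, has Hausdorff dimension $h_{\mathrm{top}}/\log m$, together with $h_{\mathrm{top}}=\log\rho(A)$.
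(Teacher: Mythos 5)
Your proposal is correct, and the reduction is the same as the paper's: the hypothesis $\gamma_f<1$ makes condition~(ii) of Theorem~\ref{acu} automatic, so $\Lambda(\mathcal{F})$ is the increasing union over $k$ of the sets of sequences whose tail from index $k$ is an infinite path in $T(\mathcal{F})$, and countable stability of Hausdorff dimension reduces everything to the set of infinite paths. The only divergence is at the final step: the paper simply invokes the known formula $\dim\Gamma^\infty=\log\rho(\Gamma)/\log b$ as Theorem~\ref{coa} (citing Friedland), whereas you prove it from scratch -- cylinder covers plus Gelfand's formula for the upper bound, and a Parry measure on a strongly connected component realising $\rho$ together with the mass distribution principle for the lower bound. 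Both halves of your argument are sound (in particular you correctly identify that reducibility of the adjacency matrix is the point requiring care, and you handle the $\rho=1$ case separately), so what you gain is a self-contained proof at the cost of some length; incidentally, your similarity argument for $\dim\sigma^{-N}(\Pi(\mathcal{F}))=\dim\Pi(\mathcal{F})$ is cleaner than the paper's passing claim that the relevant set differences are finite.
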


Returning again to the example illustrated in Figure~\ref{poa}, one can check that the spectral radius of the adjacency matrix of the tangency graph of $\mathcal{G}=\{g,h,k\}$ is $\tfrac12(1+\sqrt5)$. Also, $\gamma_g,\gamma_h,\gamma_k<1$. Therefore $\text{dim}\,\Lambda(\mathcal{G})=(\log(1+\sqrt5)-\log 2)/\log 3$.

As we will see, Theorem~\ref{can} is a corollary of Theorem~\ref{acu} and a well-known result on Hausdorff dimensions of paths in directed graphs. Also, because it assumes that condition (ii) from Theorem~\ref{acu} is always satisfied, it is effectively about the Hausdorff dimension of the set of composition sequences of limit-\emph{tangent} type. In contrast, the next theorem determines $\textnormal{dim}\,\Lambda(\mathcal{F})$ under the assumption that condition (i) from Theorem~\ref{acu} is always satisfied. Condition (i) is always satisfied precisely when $T(\mathcal{F})$ is a complete directed graph, that is, when there is a directed edge between every pair of (not necessarily distinct) vertices of $T(\mathcal{F})$. (Notice that, according to this definition, but in contrast to some definitions elsewhere, a complete directed graph includes a directed edge from each vertex to itself.) It is straightforward to show that $T(\mathcal{F})$ is a complete directed graph if and only if all elements of $\mathcal{F}$ share a common fixed point on $\partial\mathbb{D}$.

\begin{theorem}\label{cun}
Let $\mathcal{F}$ be a nonempty finite subcollection of $\mathcal{M}(\mathbb{D})$ for which $T(\mathcal{F})$ is the complete directed graph on $|\mathcal{F}|$ vertices. Then $\textnormal{dim}\,\Lambda(\mathcal{F})=0$ if $\gamma_f\geq1$ for all $f\in\mathcal{F}$; $\textnormal{dim}\,\Lambda(\mathcal{F})=1$ if $\gamma_f<1$ for all $f\in\mathcal{F}$; and otherwise
\[
\textnormal{dim}\,\Lambda(\mathcal{F})=\min_{s\geq 0}\frac{\log \left(\sum_{f\in\mathcal{F}}\gamma_f^{-s}\right)}{\log|\mathcal{F}|}. 
\]
\end{theorem}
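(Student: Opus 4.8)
The plan is to recast the statement as a question about digit frequencies on the full shift and then run a multifractal/large-deviation argument. Since $T(\mathcal{F})$ is the complete directed graph, condition~(i) of Theorem~\ref{acu} holds for \emph{every} sequence, so that theorem identifies $\Lambda(\mathcal{F})$ with $\{(f_n)\in\Omega(\mathcal{F}):\sum_{n\geq1}\gamma_{f_1}\dotsb\gamma_{f_n}<+\infty\}$. Write $N=|\mathcal{F}|$. If $\gamma_f\geq1$ for all $f$, then every partial product is $\geq1$, the series diverges, $\Lambda(\mathcal{F})$ is empty, and $\dim\Lambda(\mathcal{F})=0$. If $\gamma_f<1$ for all $f$, then with $\gamma=\max_f\gamma_f<1$ the series is dominated by $\sum\gamma^n$, so $\Lambda(\mathcal{F})=\Omega(\mathcal{F})$; and $\Omega(\mathcal{F})$ is the attractor of the $N$ similarities $(x_n)\mapsto(f,x_1,x_2,\dots)$, $f\in\mathcal{F}$, each of ratio $1/N$ and with pairwise disjoint images, so $\dim\Omega(\mathcal{F})=1$. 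Hence assume from now on that $\mathcal{F}$ contains a map with $\gamma_f<1$ and one with $\gamma_f\geq1$ (so $N\geq2$). Put $t_f=-\log\gamma_f$ and $S_n=t_{f_1}+\dots+t_{f_n}$, so that $\Lambda(\mathcal{F})=\{(f_n):\sum_n e^{-S_n}<+\infty\}$, and set $P(s)=\log\sum_{f\in\mathcal{F}}\gamma_f^{-s}$. Then $P$ is real-analytic and strictly convex (the $t_f$ are not all equal), $P(0)=\log N$, and $P(s)\to+\infty$ as $s\to+\infty$ (some $\gamma_f<1$), so $P$ attains $\min_{s\geq0}P$ at a unique point $s_0\geq0$ with $s_0P'(s_0)=0$. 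The target is $\dim\Lambda(\mathcal{F})=P(s_0)/\log N$, which is $\min_{s\geq0}(\log\sum_f\gamma_f^{-s})/\log N$ and equals $1$ precisely when $P'(0)\geq0$.

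Everything rests on the chain of inclusions
\[
\{(f_n):\liminf_n S_n/n>0\}\ \subseteq\ \Lambda(\mathcal{F})\ \subseteq\ \{(f_n):S_n\to+\infty\}\ \subseteq\ \{(f_n):\liminf_n S_n/n\geq0\},
\]
the first because $S_n\geq cn$ eventually gives $\sum e^{-S_n}\leq\sum e^{-cn}<+\infty$, the others because the terms of a convergent series vanish. So it suffices to bound the dimensions of the two outer sets.

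For the upper bound, fix $\varepsilon>0$. Then $\{\liminf S_n/n\geq0\}\subseteq\bigcup_{M\geq1}\{S_n\geq-\varepsilon n\text{ for all }n\geq M\}$, and for fixed $M$ and each $n\geq M$ this set is covered by the level-$n$ cylinders $[w]$, $w\in\mathcal{F}^n$, with $S_n(w)\geq-\varepsilon n$, each of diameter $\leq N^{-n}$. Since $s_0\geq0$, each such $w$ satisfies $1\leq e^{s_0\varepsilon n}e^{s_0S_n(w)}=e^{s_0\varepsilon n}\prod_{k=1}^n\gamma_{w_k}^{-s_0}$, so the number of them is at most $e^{s_0\varepsilon n}\sum_{w\in\mathcal{F}^n}\prod_k\gamma_{w_k}^{-s_0}=e^{s_0\varepsilon n}e^{nP(s_0)}$. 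Hence whenever $\sigma\log N>P(s_0)+s_0\varepsilon$ the corresponding $\sigma$-dimensional Hausdorff sum is $\leq e^{n(P(s_0)+s_0\varepsilon-\sigma\log N)}\to0$, so $\dim\{S_n\geq-\varepsilon n\text{ eventually}\}\leq(P(s_0)+s_0\varepsilon)/\log N$; taking the countable union over $M$ and then letting $\varepsilon\to0$ gives $\dim\{\liminf S_n/n\geq0\}\leq P(s_0)/\log N$, and a fortiori $\dim\Lambda(\mathcal{F})\leq P(s_0)/\log N$.

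For the lower bound, fix $\eta$ with $P'(s_0)<\eta<\max_f t_f$ and let $s\geq s_0$ solve $P'(s)=\eta$ (possible since $P'$ is a continuous strictly increasing bijection of $\mathbb{R}$ onto $(\min_f t_f,\max_f t_f)$). Let $\mu_s$ be the Bernoulli measure on $\Omega(\mathcal{F})$ with weights $p_f=\gamma_f^{-s}e^{-P(s)}$. Under $\mu_s$ the increments $t_{f_n}$ are i.i.d.\ with mean $\sum_f p_f t_f=P'(s)=\eta>0$, so $S_n/n\to\eta$ almost surely and hence $\mu_s(\Lambda(\mathcal{F}))=1$; and the strong law applied to $-\log p_{f_n}$ shows that $\mu_s$-almost every point has local dimension $H(\mu_s)/\log N$ with $H(\mu_s)=-\sum_f p_f\log p_f=P(s)-sP'(s)=P(s)-s\eta$, so by the mass distribution principle $\dim\Lambda(\mathcal{F})\geq(P(s)-s\eta)/\log N$. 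Letting $\eta\downarrow P'(s_0)$ forces $s\to s_0$ and the bound tends to $(P(s_0)-s_0P'(s_0))/\log N=P(s_0)/\log N$, which together with the upper bound completes the proof. The main obstacle is precisely here: the ``natural'' Gibbs measure $\mu_{s_0}$ at the minimiser lives on $\{S_n/n\to0\}$ and realises its dimension, but when $s_0>0$ it assigns $\Lambda(\mathcal{F})$ measure zero (a mean-zero bounded random walk has $\liminf S_n=-\infty$ almost surely, so $S_n\not\to+\infty$), so no single measure on $\Lambda(\mathcal{F})$ attains the dimension; one is forced into the approximation-and-limit scheme above, and the delicate point is that the two sandwiching sets $\{\liminf S_n/n>0\}$ and $\{\liminf S_n/n\geq0\}$ must both turn out to have dimension $P(s_0)/\log N$.
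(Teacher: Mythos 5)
Your proof is correct and follows essentially the same route as the paper's: the same reduction via Theorem~\ref{acu} to the convergence of $\sum\gamma_{f_1}\dotsb\gamma_{f_n}$, the same root-test sandwich between $\{\liminf S_n/n>0\}$ and $\{\liminf S_n/n\geq 0\}$, and the same one-parameter family of Bernoulli weights $p_f\propto\gamma_f^{-s}$ with $s\downarrow s_0$ used for both the lower and upper bounds. The only difference is presentational: the paper transfers the problem to $b$-ary expansions on $[0,1]$ and quotes the Besicovitch--Eggleston formula and Billingsley's lemma, whereas you stay on the shift space and prove the corresponding cylinder-counting and mass-distribution estimates directly.
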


Given an arbitrary finite subset  $\mathcal{F}$ of $\mathcal{M}(\mathbb{D})$, Theorems~\ref{can} and~\ref{cun} provide upper bounds for  the Hausdorff dimension of $\Lambda(\mathcal{F})$. The problem of determining $\textnormal{dim}\,\Lambda(\mathcal{F})$ in general remains open.

So far we have focused on determining those composition sequences of limit-disc type, and we have ignored other questions of convergence. In fact, it is known already that any composition sequence generated by a finite subset of $\mathcal{M}(\mathbb{D})$ converges locally uniformly in $\mathbb{D}$ to a constant (this follows, for example, from \cite{Lo2007}*{Theorems~3.8 and~3.10}). Here we will prove some stronger theorems on convergence for finitely-generated composition sequences. 

Before we can state our results on convergence, we must recall some three-dimensional hyperbolic geometry. Refer to a text such as  \cite{Be1995} for a detailed introduction to hyperbolic geometry. Let $\mathbb{H}^3=\{(x,y,t)\in\mathbb{R}^3\,:\,t>0\}$. When equipped with the Riemann density $ds/t$, this set becomes a model of three-dimensional hyperbolic space; we denote the hyperbolic metric by $\rho$. We identify the complex plane $\mathbb{C}$ with the plane $t=0$ in $\mathbb{R}^3$ in the obvious manner, so that $\overline{\mathbb{C}}$ can be thought of as the ideal boundary of $(\mathbb{H}^3,\rho)$. The compact set $\overline{\mathbb{H}^3}=\mathbb{H}^3\cup\overline{\mathbb{C}}$ can be endowed with the chordal metric. We extend the action of the M\"obius group from $\overline{\mathbb{C}}$ to $\mathbb{H}^3$ in the usual way, so that each M\"obius transformation is an orientation-preserving hyperbolic isometry of $(\mathbb{H}^3,\rho)$. Let $j$ denote the distinguished point $(0,0,1)$ in $\mathbb{H}^3$. Notice that here, and throughout the paper, we use the bar notation $\overline{X}$ to represent the closure with respect to the chordal metric of a subset $X$ of $\overline{\mathbb{H}^3}$. 

A sequence of M\"obius transformations $(F_n)$ is called an \emph{escaping sequence} if $\rho(j,F_n(j))\to \infty$ as $n\to\infty$ (the choice of the particular point $j$ from $\mathbb{H}^3$ is not important here, or in any of the definitions to follow). The term `escaping sequence' is taken from the theory of transcendental dynamics, where it has a somewhat similar meaning. Escaping sequences also play an important role in the theory of Kleinian groups; after all, any sequence of distinct elements from a Kleinian group is an escaping sequence. Escaping sequences feature significantly in continued fraction theory too; however, in that context they are usually called \emph{restrained sequences}, and are often defined differently (in terms of the action of the sequence on $\overline{\mathbb{C}}$ rather than on $\mathbb{H}^3$; see \cite{LoWa2008}*{Definition~2.6}). We say that an escaping sequence $(F_n)$ is a \emph{rapid-escape sequence} if $\sum \exp(-\rho(j,F_n(j))) <+\infty$. This type of sum is familiar to Kleinian group theorists: it relates to the critical exponent of a Kleinian group, a connection that has been explored before in \cite{Sh2012}. 

We have one final definition: an escaping sequence $(F_n)$ is said to \emph{converge ideally} to a point $p$ in $\overline{\mathbb{C}}$ if the sequence $(F_n(j))$ converges to $p$ in the chordal metric. In the continued fractions literature, sequences that converge ideally are usually said to \emph{converge generally}; we have changed the terminology to fit our context.

Our first theorem on convergence demonstrates that composition sequences generated by finite subsets of $\mathcal{M}(\mathbb{D})$ are rapid-escape sequences. In fact, the theorem is more general than this, as it does not assume that the generating set is finite. Let $\text{rad}(D)$ denote the Euclidean radius of a Euclidean disc $D$.

\begin{theorem}\label{ibi}
Let $(f_n)$ be a sequence of M\"obius transformations such that~$f_n(\mathbb{D})\subset \mathbb{D}$ and $\textnormal{rad}(f_n(\mathbb{D}))<\delta$, for $n=1,2,\dotsc$, where  $0<\delta<1$. Then $F_n=f_1\dotsb f_n$ is a rapid-escape sequence.
\end{theorem}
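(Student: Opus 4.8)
The plan is to translate the rapid‑escape condition into Euclidean data about the nested discs $F_n(\mathbb{D})$, obtain a first crude bound, and then correct it using three‑dimensional hyperbolic geometry. Conjugating by a M\"obius transformation we may assume $\overline{\mathbb{D}}$ lies in the ideal boundary of the upper half‑space $\mathbb{H}^3=\mathbb{C}\times(0,\infty)$, with $j=(0,0,1)$. Writing $g\in\mathcal{M}(\mathbb{D})$ as a matrix $\left(\begin{smallmatrix}a&b\\c&d\end{smallmatrix}\right)$ with $ad-bc=1$, one has $2\cosh\rho(j,g(j))=|a|^2+|b|^2+|c|^2+|d|^2$ and $\operatorname{rad}(g(\mathbb{D}))=(|d|^2-|c|^2)^{-1}$, whence
\[
e^{-\rho(j,g(j))}\;\le\;\bigl(\cosh\rho(j,g(j))\bigr)^{-1}\;\le\;2\bigl(|d|^2-|c|^2\bigr)^{-1}\;=\;2\operatorname{rad}(g(\mathbb{D})).
\]
If $\sum_n\operatorname{rad}(F_n(\mathbb{D}))$ converged we would be done, but it need not: taking every $f_n$ equal to a fixed parabolic element of $\mathcal{M}(\mathbb{D})$ gives $\operatorname{rad}(F_n(\mathbb{D}))\asymp1/n$, which is not summable, even though a direct computation gives $\rho(j,F_n(j))\sim2\log n$, so $(F_n)$ is in fact a rapid‑escape sequence. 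Thus the crude bound loses precisely the factor that makes the theorem work, and the real task is to show that slow decay of $\operatorname{rad}(F_n(\mathbb{D}))$ is always compensated by extra decay of $e^{-\rho(j,F_n(j))}$.

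To recover that factor I would use the hyperbolic Pythagorean theorem. Let $\Pi_n$ be the Euclidean hemisphere over $F_n(\mathbb{D})$ — a totally geodesic plane in $\mathbb{H}^3$ — and let $q_n$ be the foot of the perpendicular from $j$ to $\Pi_n$; since $F_n(j)\in\Pi_n$, $\cosh\rho(j,F_n(j))=\cosh\rho(j,\Pi_n)\cosh\rho(q_n,F_n(j))$. Mapping $F_n(\mathbb{D})$ affinely onto $\mathbb{D}$ one finds $\sinh\rho(j,\Pi_n)=(1-r_n^2+|c_n|^2)/(2r_n)$, where $c_n,r_n$ are the centre and radius of $F_n(\mathbb{D})$; as $r_n\le\operatorname{rad}(f_1(\mathbb{D}))<\delta$ this gives $\cosh\rho(j,\Pi_n)>(1-\delta^2)/(2r_n)$, and hence
\[
e^{-\rho(j,F_n(j))}\;<\;\frac{1}{\cosh\rho(j,\Pi_n)\,\cosh\rho(q_n,F_n(j))}\;<\;\frac{2}{1-\delta^2}\cdot\frac{r_n}{\cosh\rho(q_n,F_n(j))}.
\]
So the theorem would follow from $\sum_n r_n\big/\cosh\rho(q_n,F_n(j))<\infty$. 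Here $r_n\le\delta$ is harmless and $\cosh\rho(q_n,F_n(j))$ is the genuinely dynamical quantity, recording how far the composition has dragged the base point along $\Pi_n$. Since $\Pi_n=F_n(\Pi)$ and $j\in\Pi$, we have $\rho_{\Pi_n}(q_n,F_n(j))=\rho_{\Pi}\bigl(j,\text{ nearest point of }\Pi\text{ to }F_n^{-1}(j)\bigr)$, so the reversed composition $F_n^{-1}=f_n^{-1}\cdots f_1^{-1}$ — whose factors each carry the exterior disc $\overline{\mathbb{C}}\setminus\overline{\mathbb{D}}$ strictly inside itself — enters the estimate, which I would attack through the recursion $F_n(j)=F_{n-1}(f_n(j))$ and the explicit formula for the Euclidean height of a Poincar\'e extension.

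The main obstacle is the near‑tangency regime. When $\overline{F_{n-1}(\mathbb{D})}$ almost touches $\partial\mathbb{D}$ at a point $w$ and $f_n(j)$ sits roughly over $F_{n-1}^{-1}(\infty)$ (which is then close to $\partial\mathbb{D}$), a single step of the recursion need not lower the Euclidean height of $F_{n-1}(j)$ at all — this is exactly the parabolic phenomenon, now happening with varying maps over long runs. I would handle it by splitting $\mathbb{N}$ into maximal runs along which $F_k(\mathbb{D})$ stays within a fixed chordal distance of a common boundary point, conjugating each run to a half‑space model sending that point to $\infty$, where the maps in play become uniformly close to translations; their product then displaces the base point by an amount growing linearly in the length of the run, forcing $\cosh\rho(q_k,F_k(j))$ to grow at least linearly and contributing a convergent series of $1/(\text{length})^2$‑type terms, while on the remaining indices $F_k(\mathbb{D})$ lies in a fixed compact subset of $\mathbb{D}$ and the heights decay geometrically. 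Fitting the two regimes together — in particular controlling the junctions and the possibility of consecutive near‑tangency runs at different boundary points — is where the real work lies.
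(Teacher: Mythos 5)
Your reduction is sound as far as it goes: the crude bound $e^{-\rho(j,F_n(j))}\leq 2\,\mathrm{rad}(F_n(\mathbb{D}))$, the parabolic counterexample showing it is insufficient, the Pythagorean factorisation $\cosh\rho(j,F_n(j))=\cosh\rho(j,\Pi_n)\cosh\rho(q_n,F_n(j))$ and the estimate $\cosh\rho(j,\Pi_n)>(1-\delta^2)/(2r_n)$ are all correct, and they correctly isolate $\sum_n r_n/\cosh\rho(q_n,F_n(j))<\infty$ as the statement to be proved. But that statement \emph{is} the theorem, and the run-decomposition you sketch for it does not close. Concretely: (a) the theorem has no finite-generation hypothesis, so "the maps in play become uniformly close to translations" on a near-tangency run is unjustified --- conjugating the run's boundary point to $\infty$ turns each $f_k$ into (approximately) $z\mapsto a_kz+b_k$ with $\mathrm{Re}\,b_k$ bounded below by the radius hypothesis but $a_k$ completely unconstrained, and when the $a_k$ are far from $1$ the displacement of the base point is governed by $\log(a_1\dotsb a_k)$ rather than by linear growth of the translation part, so "$\cosh\rho(q_k,F_k(j))$ grows at least linearly in the run length" is not established; (b) the claim that off the runs "the heights decay geometrically" needs an argument --- membership of $F_k(\mathbb{D})$ in a fixed compact set does not by itself force the ratios $r_{k-1}/r_k$ away from $1$; (c) you yourself flag the junctions and consecutive runs at distinct boundary points as unresolved. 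As written, the proposal is a correct setup plus an honest description of the hard part, not a proof.

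The missing ingredient is a telescoping argument that makes the run analysis unnecessary. The paper works with the height $h(F_{n-1}(j))$ directly (note $e^{-\rho(j,z)}\leq h(z)$ always, so it suffices to prove $\sum_n h(F_{n-1}(j))<\infty$; this single inequality replaces your Pythagorean decomposition) and splits $\mathbb{N}$ according to whether $r_{n-1}/r_n\geq\eta$ or $<\eta$, where $\eta=\min\{2,1+\tfrac18\sinh(-\log\delta)\}>1$ depends only on $\delta$. On the first set the radii, hence the heights, decay geometrically along the subsequence. On the second set a purely Euclidean lemma about nested hemispheres (Lemma~\ref{cul}), combined with $\rho(F_{n-1}(j),F_n(\Pi))=\rho(j,f_n(\Pi))>-\log\delta$, yields $h(F_{n-1}(j))\leq \tfrac{3}{\varepsilon}(r_{n-1}-r_n)$ with $\varepsilon=-\log\delta$; since $\sum(r_{n-1}-r_n)\leq 1$, this part of the sum converges with no information about where on $\Pi_n$ the point $F_n(j)$ sits, which is exactly the information your approach is forced to track. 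If you want to salvage your route, I would replace the run decomposition by this step-by-step dichotomy on the ratio of consecutive radii.
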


The property of being a rapid-escape sequence has strong implications for convergence. By placing further, relatively mild conditions on a rapid-escape sequence (such as assuming that the sequence is a finitely-generated composition sequence) one can prove that the sequence converges ideally. What is more, if $(F_n)$ is a rapid-escape sequence that converges ideally to a point $p$, then one can show that the set of points $z$ in $\overline{\mathbb{C}}$ such that $F_n(z)\nrightarrow p$ as $n\to\infty$ has Hausdorff dimension at most~1 (see \cite{Sh2012}*{Corollary~3.5}). For finitely-generated composition sequences of the type that interest us, we have the following precise result.

\begin{theorem}\label{mwd}
Let $F_n=f_1\dotsb f_n$ be a composition sequence generated by a finite subcollection $\mathcal{F}$ of $\mathcal{M}(\mathbb{D})$. Then $(F_n)$ converges ideally to a point $q$ in $\overline{\mathbb{D}}$. Furthermore,  $(F_n)$ converges locally uniformly to $q$ on $\overline{\mathbb{D}}\setminus X$, where 
\[
X=\{\alpha_f\in\partial\mathbb{D}\,:\, \text{$f_n=f$ for infinitely many integers $n$}\}.
\]
For each  $x\in X$, the sequence $(F_n(x))$ converges to $q$ if and only if $(F_n)$ is of limit-point type.
\end{theorem}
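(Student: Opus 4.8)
The plan is to work in $\mathbb{H}^3$ and exploit the rapid-escape property supplied by Theorem~\ref{ibi}. First I would observe that since $\mathcal{F}$ is finite, there is a uniform bound $\delta<1$ on $\mathrm{rad}(f(\mathbb{D}))$ over $f\in\mathcal{F}$, so Theorem~\ref{ibi} applies and $(F_n)$ is a rapid-escape sequence; in particular it is an escaping sequence, $\rho(j,F_n(j))\to\infty$. Because each $F_n(\overline{\mathbb{D}})$ is a closed subdisc of $\overline{\mathbb{D}}$ and these are nested, the Euclidean diameters $\mathrm{diam}(F_n(\overline{\mathbb{D}}))$ are decreasing; combined with escaping, one checks they tend to $0$ unless the sequence is of limit-disc type. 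In the limit-point case the nested intersection $\bigcap F_n(\overline{\mathbb{D}})$ is the single point $q$, and since $F_n(j)\in F_n(\mathbb{H}^3)$ with the latter's chordal closure meeting $\overline{\mathbb{C}}$ exactly in $F_n(\overline{\mathbb{D}})$, the escaping point $F_n(j)$ is forced into a chordal neighbourhood of $F_n(\overline{\mathbb{D}})$, hence $F_n(j)\to q$; this gives ideal convergence. In the limit-disc case one still gets ideal convergence to some $q\in\overline{\mathbb{D}}$ by a compactness/subsequence argument using that $F_n(j)$ lies chordally close to the shrinking-in-one-direction region $F_n(\overline{\mathbb{H}^3})$ whose boundary discs nest down to a fixed closed disc $K$; the escaping hypothesis pushes $F_n(j)$ toward $\partial K\subset\overline{\mathbb{C}}$, and I would need the geometry of $\mathcal{M}(\mathbb{D})$ acting on $\mathbb{H}^3$ to pin the limit to a single point.

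Next, for local uniform convergence on $\overline{\mathbb{D}}\setminus X$: the key mechanism is that $F_n=F_{n-1}f_n$ and $f_n(\overline{\mathbb{D}})$ is either internally tangent to $\partial\mathbb{D}$ at $\beta_{f_n}$ (touching $\mathbb{D}$ from inside) or compactly contained in $\mathbb{D}$. I would fix a compact set $L\subset\overline{\mathbb{D}}\setminus X$ and show that $F_n(L)$ shrinks to $q$. The obstruction to shrinkage is precisely that a point $x\in\overline{\mathbb{D}}$ can fail to be contracted by $F_n$ only if it sits at (or very near) a tangency point $\alpha_{f_n}$ through which the image disc fails to contract $x$ at a definite rate; iterating, the only points whose $F_n$-orbit does not approach $q$ are those that, for infinitely many $n$, coincide with $\alpha_f$ for the value $f=f_n$ occurring infinitely often — exactly the set $X$. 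Concretely I would use the Schwarz-Pick / hyperbolic contraction estimate: each $f\in\mathcal{F}$ contracts the hyperbolic metric of $\mathbb{D}$, and away from $\alpha_f$ it contracts chordal distance to $\beta_f$ by a factor bounded away from $1$ on compact subsets; a finiteness argument over $\mathcal{F}$ then yields uniform geometric decay of $\mathrm{diam}(F_n(L))$. I would organise this as: (a) for $x\notin X$, there is $\varepsilon>0$ and infinitely many $n$ with $x$ at chordal distance $\ge\varepsilon$ from $\alpha_{f_{n+1}}$; (b) at each such step $F_n(\overline{\mathbb{D}})$ has its image of the $\varepsilon$-far region strictly interior, so a definite contraction occurs; (c) sum up using escaping to conclude $F_n(x)\to q$, then upgrade to uniformity on $L$ by compactness and equicontinuity (normality of $(F_n)$ on $\mathbb{D}$ plus boundary control).

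Finally, for $x\in X$: write $x=\alpha_f$ where $f_n=f$ for infinitely many $n$. Along such indices $n$, $F_{n-1}(\overline{\mathbb{D}})$ and $F_n(\overline{\mathbb{D}})=F_{n-1}(f(\overline{\mathbb{D}}))$ are internally tangent at the common boundary point $F_{n-1}(\beta_f)=F_n(\alpha_f)=F_n(x)$. So $F_n(x)$ lies on $\partial F_n(\overline{\mathbb{D}})$ for infinitely many $n$, i.e. $F_n(x)$ stays on the boundary of the nested discs along a subsequence. If $(F_n)$ is of limit-point type the nested discs shrink to $q$, so any point of their boundaries converges to $q$, forcing $F_n(x)\to q$ (monotone shrinking handles the full sequence, not just the subsequence, since once tangency at $x$ occurs it persists or the later images still sit inside). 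Conversely, if $(F_n)$ is of limit-disc type, then $\bigcap F_n(\overline{\mathbb{D}})=K$ is a genuine disc; by Theorem~\ref{dgi} the sequence is of limit-tangent type, so cofinitely the discs share a common boundary point $p$, and I would argue that $x=\alpha_f$ is one of the tangency points feeding this structure, whence $F_n(x)=p\ne q$ eventually (or at least $F_n(x)\to p$, a boundary point of $K$ distinct from the ideal limit $q$).

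The main obstacle I expect is step (b)–(c) of the local-uniform-convergence argument: converting the per-step ``strictly interior image away from the tangency point'' into a \emph{uniform, summable} contraction estimate that works simultaneously for all points of a compact $L\subset\overline{\mathbb{D}}\setminus X$ and all (finitely many) possible generators, while correctly tracking which tangency points $\alpha_{f_n}$ are dangerous at stage $n$. The escaping/rapid-escape hypothesis is what prevents pathological stalling, and threading that hyperbolic input through the chordal boundary estimates cleanly — rather than drowning in case analysis over $T(\mathcal{F})$ — will be the delicate part.
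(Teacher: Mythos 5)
Your overall architecture matches the paper's (apply Theorem~\ref{ibi}, then ideal convergence, then locally uniform convergence off $X$, then the dichotomy on $X$), but two essential steps are missing rather than merely deferred. First, ideal convergence in the limit-disc case. You invoke only the \emph{escaping} property and concede you cannot ``pin the limit to a single point''; escaping alone genuinely cannot do this, since $F_n(j)$ could wander around the hemisphere over the limit disc while its height tends to $0$. The point of proving the \emph{rapid}-escape property in Theorem~\ref{ibi} is exactly to close this gap: since $\mathcal{F}$ is finite, $\rho(F_{n-1}(j),F_n(j))=\rho(j,f_n(j))$ is bounded, and in the ball model the identity $|z-w|=\sqrt{(1-|z|^2)(1-|w|^2)}\,\sinh\tfrac12\rho(z,w)$ converts the bounded hyperbolic step together with $\sum e^{-\rho(0,F_n(0))}<+\infty$ into $\sum|F_{n-1}(0)-F_n(0)|<+\infty$, so $(F_n(0))$ is Cauchy and converges; escaping then forces the limit onto the ideal boundary. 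Your proposal establishes that $(F_n)$ is a rapid-escape sequence and then never uses the summability, which is the whole point of that hypothesis.

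Second, the converse on $X$: for limit-disc type you need to know that the common tangency point $z_0$ of the nested discs (which is what $F_n(x)$ equals for infinitely many $n$) is \emph{different} from the ideal limit $q$. You assert this (``a boundary point of $K$ distinct from the ideal limit $q$''), but both $z_0$ and $q$ lie in the limit disc $K$, and a priori $q$ could lie on $\partial K$, so distinctness is not automatic. The paper devotes the second assertions of Lemmas~\ref{pbe} and~\ref{abc} to exactly this: conjugating so that the tangency point goes to $\infty$ in the half-plane $\mathbb{K}$, the conjugated maps become $z\mapsto a_nz+b_n$ and the orbit of $j$ converges to the finite point $\sum_k a_1\dotsb a_{k-1}b_k$, hence $q\neq z_0$. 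Without some such argument the ``only if'' direction is unproved. (Your middle part, locally uniform convergence on $\overline{\mathbb{D}}\setminus X$, is also left as an acknowledged obstacle; note that the quantitative per-step contraction scheme you outline is unnecessary: the paper cites Aebischer's result that an ideally convergent sequence converges locally uniformly off its backward limit set, which here lies in $\overline{\mathbb{C}}\setminus\mathbb{D}$, and then for $y\in\partial\mathbb{D}\setminus X$ observes that $f(y)\in\mathbb{D}$ for every generator $f$ occurring infinitely often, so a single application of $f_n$ maps a neighbourhood of $y$ into a fixed compact subset of $\mathbb{D}$ where uniform convergence is already known.)
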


In fact, it is relatively straightforward to prove that if $(F_n)$ is of limit-disc type, and $|X|>1$, then the sequence $(F_n(x))$ diverges.

Theorem~\ref{mwd} gives us a complete understanding of the dynamical system $(F_n)$. The part about converging ideally is known from other more general results (again, we refer the reader to \cite[Theorems~3.8 and~3.10]{Lo2007}).

Throughout the paper we adopt the convention that $A \subset B$ and $B\supset A$ both mean that $A$ is a subset of $B$, possibly equal to $B$.

\section{Sequences of limit-tangent type}

Let us say that a nested sequence of open discs $D_1\supset D_2\supset \dotsb$ is \emph{eventually tangent} if all but finitely many of the discs $D_n$ share a common boundary point. To prove Theorem~\ref{dgi}, we need the following elementary lemma, the proof of which, towards the end, uses the even-more elementary observation that if two distinct open discs $D$ and $E$ satisfy $D\subset E$, then they can have at most one point of tangency.

\begin{lemma}\label{emr}
Let $D_1\supset D_2\supset \dotsb$ be a nested sequence of open discs in the plane, no two of which are equal. The sequence is eventually tangent if and only if $D_{n}$ is tangent to $D_{n+2}$ for all but finitely many values of $n$.
\end{lemma}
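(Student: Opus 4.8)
The plan is to treat the two implications separately; the forward implication is immediate, and essentially all the work sits in the converse. Throughout I would use the elementary fact flagged in the statement: two distinct open discs, one contained in the other, meet in at most one boundary point (their boundary circles are either disjoint or touch in a single point).

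For the forward implication, suppose the sequence is eventually tangent, so there is an integer $N$ and a point $p$ with $p\in\partial D_n$ for all $n\geq N$. Fixing $n\geq N$, the point $p$ lies on the boundary of both $D_n$ and $D_{n+2}$; since $D_{n+2}\subset D_n$ and $D_{n+2}\neq D_n$, the elementary fact forces $\{p\}$ to be the entire set of common boundary points, i.e.\ $D_n$ and $D_{n+2}$ are tangent at $p$. So $D_n$ is tangent to $D_{n+2}$ for every $n\geq N$.

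For the converse, suppose $D_n$ is tangent to $D_{n+2}$ for all $n\geq N$, and let $p_n$ denote the unique common boundary point of $D_n$ and $D_{n+2}$ (so $p_n\in\partial D_n$). The key claim is that the sequence $(p_n)_{n\geq N}$ is constant; its common value $p$ then satisfies $p\in\partial D_n$ for all $n\geq N$, which is exactly ``eventually tangent''. The first step would be a sandwich observation: since $p_n\in\partial D_{n+2}\subset\overline{D_{n+2}}\subset\overline{D_{n+1}}$, while $p_n\in\partial D_n$ forces $p_n\notin D_n$ and hence $p_n\notin D_{n+1}$, we get $p_n\in\overline{D_{n+1}}\setminus D_{n+1}=\partial D_{n+1}$. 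Thus $p_n$ is a common boundary point of $D_{n+1}$ and $D_{n+2}$. Running the identical argument with $n$ replaced by $n+1$ shows $p_{n+1}$ is also a common boundary point of $D_{n+1}$ and $D_{n+2}$; since $D_{n+2}\subset D_{n+1}$ are distinct, the elementary fact gives $p_n=p_{n+1}$, completing the induction.

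I expect the sandwich step --- noticing that a boundary point of $D_n$ which lies in $\overline{D_{n+2}}$ is automatically pinned onto $\partial D_{n+1}$ --- to be the one point that needs a little care; once the tangency data has been transferred from $D_n,D_{n+2}$ to the consecutive pair $D_{n+1},D_{n+2}$, uniqueness of the tangency point of two nested discs closes everything off. The only remaining technicalities are that the sequence is strictly decreasing (immediate from the hypothesis that no two discs are equal) and that $\overline{D_{n+2}}\subset\overline{D_{n+1}}$, both of which are routine.
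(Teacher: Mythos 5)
Your proof is correct and follows essentially the same route as the paper's: both arguments pin the tangency point of $D_n$ and $D_{n+2}$ onto $\partial D_{n+1}$ by a sandwich argument and then invoke uniqueness of the common boundary point of two distinct nested discs to chain the tangency points together. The only (immaterial) difference is that you phrase the sandwich step topologically via $\overline{D_{n+2}}\subset\overline{D_{n+1}}$ and $p_n\notin D_{n+1}$, whereas the paper uses the Euclidean centre and radius of $D_{n+1}$ to get $|p-c|\geq r$ and $|p-c|\leq r$.
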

\begin{proof}
Suppose that the sequence $(D_n)$ is eventually tangent. Then there is a point $p$ in the plane and a positive integer $m$ such that $p\in \partial D_n$ when $n\geq m$. Therefore $D_n$ and $D_{n+2}$ share a common boundary point, provided $n\geq m$, and because the two discs are distinct and satisfy $D_{n+2}\subset D_n$, they must be tangent to one another.

Conversely, suppose that there is a positive integer $m$ such that $D_{n}$ is tangent to $D_{n+2}$ when $n\geq m$. For any particular integer $k$, with $k\geq m$, let $p$ be the point of tangency of $D_k$ and $D_{k+2}$, and let $c$ and $r$ be the Euclidean centre and radius of $D_{k+1}$, respectively. Now observe that, first, $p\in \partial D_k$ and $D_{k}\supset D_{k+1}$, so $|p-c|\geq r$, and second, $p\in \partial D_{k+2}$ and $D_{k+2}\subset D_{k+1}$, so $|p-c|\leq r$. Therefore $|p-c|=r$, which implies that $D_k$, $D_{k+1}$ and $D_{k+2}$ have a (unique) common point of tangency. It follows that all the discs $D_n$, for $n\geq m$, have a common point of tangency.
\end{proof}

We remark that the lemma remains true if  we replace `$D_{n}$ is tangent to $D_{n+2}$' with `$D_{n}$ is tangent to $D_{n+q}$', for any positive integer $q>1$.

We also use the following theorem, which is a simplified version of \cite[Theorem~1]{Lo1990}. (Significantly more general versions of this result are known; see, for example, \cite{BeCaMiNg2004}).

\begin{theorem}\label{bia}
Suppose that $K$ is a compact subset of $\mathbb{D}$ and $\mathcal{F}$ is a collection of analytic maps from $\mathbb{D}$ to $K$. Then any composition sequence generated by $\mathcal{F}$ converges locally uniformly in $\mathbb{D}$ to a constant.\qed
\end{theorem}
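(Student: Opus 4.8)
The plan is to deduce the result directly from the Schwarz--Pick lemma, by first showing that every map in $\mathcal{F}$ is a strict contraction of the hyperbolic metric of $\mathbb{D}$ with one and the same contraction factor, and then feeding a nested sequence of images through these contractions.

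Write $\varrho$ for the hyperbolic metric on $\mathbb{D}$ and set $r=\sup_{z\in K}|z|$; since $K$ is a compact subset of $\mathbb{D}$ we have $0\leq r<1$, and if $r=0$ the statement is trivial. The first step is to prove the estimate
\[
\varrho\big(f(z),f(w)\big)\leq r\,\varrho(z,w)\qquad\text{for all }z,w\in\mathbb{D}\text{ and all }f\in\mathcal{F}.
\]
This is immediate if $f$ is constant. If $f$ is nonconstant, then $f(\mathbb{D})$ is open and contained in $\{z:|z|\leq r\}$, hence $|f(z)|<r$ for every $z\in\mathbb{D}$; writing $f=m_r\circ(f/r)$ with $m_r(z)=rz$, the map $f/r$ is a holomorphic self-map of $\mathbb{D}$ and so is $\varrho$-nonexpanding by Schwarz--Pick, while the density of $\varrho$ pulled back by $m_r$ equals $2r/(1-r^2|z|^2)\leq r\cdot 2/(1-|z|^2)$, so that $m_r$ contracts $\varrho$ by the factor $r$; composing the two maps yields the displayed estimate.

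Now let $F_n=f_1\dotsb f_n$ be a composition sequence generated by $\mathcal{F}$. By the estimate just proved, $f_1\dotsb f_{n-1}$ contracts $\varrho$ by the factor $r^{n-1}$, and since $f_n(\mathbb{D})\subseteq K$ we have $F_n(\mathbb{D})\subseteq f_1\dotsb f_{n-1}(K)$, so that
\[
\operatorname{diam}_{\varrho}F_n(\mathbb{D})\leq\operatorname{diam}_{\varrho}\big(f_1\dotsb f_{n-1}(K)\big)\leq r^{n-1}\operatorname{diam}_{\varrho}K\longrightarrow 0.
\]
The key point is that, although $\operatorname{diam}_{\varrho}\mathbb{D}=+\infty$ so that a single contraction estimate on $\mathbb{D}$ is useless on its own, after one step the image already lies inside the \emph{fixed} set $K$, which has finite hyperbolic diameter. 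The sets $\overline{F_n(\mathbb{D})}$ are nested, nonempty and compact and all lie in $\overline{f_1(\mathbb{D})}\subseteq K\subset\mathbb{D}$; since their hyperbolic diameters tend to $0$ and the Euclidean distance on $\mathbb{D}$ is dominated by a constant multiple of $\varrho$, their intersection is a single point $q\in K$ and $\sup_{z\in\mathbb{D}}|F_n(z)-q|\to 0$. Thus $(F_n)$ converges to the constant $q$ uniformly on $\mathbb{D}$, which is stronger than the stated conclusion.

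The argument is essentially routine once the uniform contraction is in hand, and I expect the only real point of care --- the modest main obstacle --- to be that very step: recognising that a holomorphic map sending $\mathbb{D}$ into a fixed compact subset $K$ is not merely $\varrho$-nonexpanding but a genuine $\varrho$-contraction, with factor governed solely by $\operatorname{dist}(K,\partial\mathbb{D})$, and then realising that one must combine this with the nestedness of the images $F_n(\mathbb{D})$ rather than trying to estimate on $\mathbb{D}$ itself.
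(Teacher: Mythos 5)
Your proof is correct. Note first that the paper does not prove this statement at all: it is stated with a \qed as a simplified version of a cited result of Lorentzen (and of more general theorems of Beardon--Carne--Minda--Ng), so there is no internal argument to compare against. Your argument is the standard hyperbolic-metric proof of this special case, and every step checks out: the decomposition $f=m_r\circ(f/r)$ legitimately upgrades Schwarz--Pick from nonexpansion to contraction by the uniform factor $r=\sup_{z\in K}|z|<1$ (using the open mapping theorem to get $|f(z)|<r$, with the constant and $r=0$ cases dispatched separately); the key observation that one should measure $\operatorname{diam}_{\varrho}F_n(\mathbb{D})\leq r^{n-1}\operatorname{diam}_{\varrho}K$ rather than attempt an estimate on all of $\mathbb{D}$ (which has infinite hyperbolic diameter) is exactly the right move; and the passage from shrinking hyperbolic diameters to a single limit point with uniform Euclidean convergence is valid because the hyperbolic density on $\mathbb{D}$ is bounded below, so Euclidean distance is dominated by $\varrho$ globally. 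You even obtain uniform convergence on all of $\mathbb{D}$, slightly more than the stated locally uniform convergence, and your argument needs no finiteness assumption on $\mathcal{F}$, matching the generality of the statement. The only cosmetic remark is that the contraction factor is governed by $\sup_{z\in K}|z|$ rather than by $\operatorname{dist}(K,\partial\mathbb{D})$ as your closing sentence suggests, but either quantity works since both witness that $K$ is compactly contained in $\mathbb{D}$.
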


Let us now prove Theorem~\ref{dgi}.

\begin{proof}[Proof of Theorem~\ref{dgi}]
We will prove the contrapositive to the assertion made in the theorem; that is, we will prove that if $(F_n)$ is \emph{not} of limit-tangent type, then it is of limit-point type.

Suppose then that $(F_n)$ is not of limit-tangent type. By Lemma~\ref{emr}, there is an infinite sequence of positive integers $n_1<n_2<\dotsb$, where $n_{i+1}>n_i+1$ for all $i$, such that $F_{n_i+2}(\mathbb{D})$ is not tangent to $F_{n_i}(\mathbb{D})$ for each integer $n_i$. Therefore $f_{n_i+1}f_{n_i+2}(\mathbb{D})$ is not tangent to $\mathbb{D}$ for each integer $n_i$. Let $K$ be the union of all the sets $fg(\overline{\mathbb{D}})$, where $f,g\in \mathcal{F}$, such that $fg(\mathbb{D})$ is not tangent to $\mathbb{D}$. Since $\mathcal{F}$ is finite, $K$ is a compact subset of $\mathbb{D}$.

Let $g_1=f_1\dotsb f_{n_1}$ and $g_{i+1}=f_{n_i+1}\dotsb f_{n_{i+1}}$ for each positive integer $i$. Then
\[
g_{i+1}(\mathbb{D})=f_{n_i+1}\dotsb f_{n_{i+1}}(\mathbb{D})\subset f_{n_i+1}f_{n_i+2}(\mathbb{D})\subset K.
\]
Let $G_n=g_1\dotsb g_n$. As $g_n(\overline{\mathbb{D}})\subset K$, we can apply Theorem~\ref{bia} to see that $(G_n)$ converges uniformly on $\overline{\mathbb{D}}$ to a constant. That is, $(G_n)$ is of limit-point type. Since $(G_n)$ is a subsequence of $(F_n)$, we conclude that  $(F_n)$ is also of limit-point type.
\end{proof}

\section{Necessary and sufficient conditions for limit-disc type}

In this section we prove Theorem~\ref{acu}.

Let $\mathbb{K}$ denote the right half-plane. Suppose that $h$ is a M\"obius transformation that satisfies $h(\mathbb{K})\subset \mathbb{K}$ and $h(\infty)=\infty$. Then $h(z)=az+b$, where $a>0$ and $\text{Re}(b)\geq 0$, and $h(\mathbb{K})=\mathbb{K}$ if and only if $\text{Re}(b)=0$.

The next lemma is about composition sequences generated by the collection of those M\"obius transformations that map $\mathbb{K}$ strictly inside itself. We say that a composition sequence $(H_n)$ of this sort is of \emph{limit-disc type} if $\bigcap H_n(\overline{\mathbb{K}})$ is a disc.

\begin{lemma}\label{pbe}
Consider the finitely-generated composition sequence $H_n=h_1\dotsb h_n$, where $h_n(z)=a_nz+b_n$, with $a_n>0$  and $\textnormal{Re}(b_n)>0$. Then $(H_n)$ is of limit-disc type if and only if 
\[
\sum_{n=1}^\infty a_1\dotsb a_{n} < +\infty.
 \] 
 Furthermore, if $(H_n)$ is of limit-disc type and converges ideally to a point $q$, then $q\neq \infty$.
\end{lemma}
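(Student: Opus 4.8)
The plan is to work out explicitly what the nested discs $H_n(\overline{\mathbb{K}})$ look like. Since each $h_n(z)=a_nz+b_n$ fixes $\infty$ and maps $\mathbb{K}$ into $\mathbb{K}$, the composition $H_n$ also has the form $H_n(z)=A_nz+B_n$ with $A_n=a_1\dotsb a_n>0$ and $\textnormal{Re}(B_n)>0$; moreover $B_n=B_{n-1}+A_{n-1}b_n$, so $\textnormal{Re}(B_n)=\sum_{k=1}^n A_{k-1}\textnormal{Re}(b_k)$ (with $A_0=1$) is strictly increasing. The image $H_n(\overline{\mathbb{K}})$ is the closed half-plane $\{z:\textnormal{Re}(z)\geq \textnormal{Re}(B_n)\}$. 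Hence the intersection $\bigcap_n H_n(\overline{\mathbb{K}})$ is either a half-plane (if $\textnormal{Re}(B_n)$ stays bounded) or empty/degenerate in $\mathbb{C}$, but since we are working in $\overline{\mathbb{C}}$ the relevant dichotomy for ``limit-disc type'' is whether $\textnormal{Re}(B_n)\to\infty$ (giving $\bigcap_n H_n(\overline{\mathbb{K}})=\{\infty\}$, limit-point type) or $\textnormal{Re}(B_n)$ converges to a finite limit (giving a closed half-plane, which is a ``disc'' in the chordal sense, limit-disc type). This is the first step, and it is essentially a bookkeeping computation.

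The second step is to relate the convergence of $\sum_n A_n = \sum_n a_1\dotsb a_n$ to the boundedness of $\textnormal{Re}(B_n)=\sum_{k=1}^n A_{k-1}\textnormal{Re}(b_k)$. Because the sequence is finitely generated, there is a constant $\delta>0$ with $\textnormal{Re}(b_n)\geq\delta$ for every $n$ (there are only finitely many possible values of $b_n$, all with positive real part), and also an upper bound $\textnormal{Re}(b_n)\leq M$. Therefore $\delta\sum_{k=1}^{n} A_{k-1}\leq \textnormal{Re}(B_n)\leq M\sum_{k=1}^{n}A_{k-1}$, so $\textnormal{Re}(B_n)$ is bounded if and only if $\sum_k A_{k-1}<+\infty$, i.e.\ if and only if $\sum_n a_1\dotsb a_n<+\infty$. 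Combining with step one gives the stated equivalence. I expect this step to be routine; the only subtlety is making sure finite generation is genuinely used (it is, to get the two-sided bound on $\textnormal{Re}(b_n)$ away from $0$ and $\infty$).

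For the final clause, suppose $(H_n)$ is of limit-disc type, so $\textnormal{Re}(B_n)\to L<+\infty$. Then ideal convergence means $H_n(j)\to q$ in the chordal metric, where $j$ is the distinguished point of $\mathbb{H}^3$; extending the affine map $z\mapsto A_nz+B_n$ to $\mathbb{H}^3$ sends $j=(0,0,1)$ to the point with Euclidean ``height'' $A_n$ above the complex number $B_n$. Since $\sum A_n<+\infty$ forces $A_n\to 0$, the limit point $q$ lies on $\overline{\mathbb{C}}$, and its planar coordinate is $\lim_n B_n$, which has real part $L>0$ and in particular is finite, so $q\neq\infty$. (Alternatively, and more robustly: if $q=\infty$ then the chordal limit of $H_n(j)$ is $\infty$, but $H_n(j)$ stays in the region $\{\textnormal{Re}\geq 0\}\times(0,1]$ over a bounded subset of $\mathbb{C}$ since $\textnormal{Re}(B_n)\leq L$ and $|\textnormal{Im}(B_n)|$ is bounded — here again finite generation bounds $|\textnormal{Im}(b_n)|$, so $|\textnormal{Im}(B_n)|\leq \sum_k A_{k-1}|\textnormal{Im}(b_k)|<+\infty$ — which is a bounded distance from $j$ and hence cannot converge chordally to $\infty$.)

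The main obstacle, such as it is, will be keeping the correspondence between the half-plane picture and the chordal ``disc'' terminology clean: one must be careful that a closed half-plane in $\mathbb{C}$ is exactly a closed chordal disc in $\overline{\mathbb{C}}$ containing $\infty$ on its boundary, so that ``$\bigcap H_n(\overline{\mathbb{K}})$ is a disc'' really does correspond to $\textnormal{Re}(B_n)$ staying bounded rather than escaping. Everything else is elementary estimation, with finite generation invoked precisely to pin $\textnormal{Re}(b_n)$ and $|\textnormal{Im}(b_n)|$ between fixed positive and finite bounds.
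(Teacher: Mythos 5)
Your proposal is correct and follows essentially the same route as the paper: compute $H_n(z)=A_nz+B_n$ explicitly, observe that $H_n(\mathbb{K})$ is the half-plane $\operatorname{Re}(z)>\operatorname{Re}(B_n)$ so that limit-disc type is equivalent to $\operatorname{Re}(B_n)$ converging, use finite generation to pin $\operatorname{Re}(b_n)$ in $[\delta,M]$ with $\delta>0$, and deduce the final clause from $A_n\to 0$ and the absolute convergence of $\sum A_{k-1}b_k$. Your extra remark that finite generation also bounds $|\operatorname{Im}(b_n)|$, ensuring $B_n$ actually converges, is a detail the paper leaves implicit.
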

\begin{proof}
Observe that $H_n(z) = a_1\dotsb a_nz + \sum_{k=1}^n a_1\dotsb a_{k-1}b_k$ (where $a_1\dotsb a_0=1$). Therefore
\[
H_n(\mathbb{K}) = t_n + \mathbb{K},\quad\text{where}\quad t_n=\sum_{k=1}^n a_1\dotsb a_{k-1}\text{Re}(b_k).
\]
Hence $(H_n)$ is of limit-disc type if and only if $(t_n)$ converges to a finite number. As $(H_n)$ is finitely generated, we see that the positive numbers $\text{Re}(b_k)$ take only finitely many values. The first assertion of the theorem follows.

 For the second assertion, we know that $\sum_n a_1\dotsb a_n <+\infty$, so $a_1\dotsb a_n\to 0$ as $n\to\infty$. Therefore $H_n(j)\to q$, where $q=\sum_k a_1\dotsb a_{k-1}b_k$, as $n\to\infty$. In particular, $q\neq \infty$.
\end{proof}

The chordal derivative $f^\#$ of a M\"obius transformation $f$ is given by
\[
f^\#(z) = \frac{1+|z|^2}{1+|f(z)|^2}|f'(z)|,
\]
with the usual conventions about $\infty$ values. Notice in particular that if $|z|=1$ and $|f(z)|=1$, then $f^\#(z)=|f'(z)|$.

\begin{lemma}\label{abc}
Let $F_n=f_1\dotsb f_n$ be a composition sequence generated by a finite subset of $\mathcal{M}(\mathbb{D})$ such that $f_n(z_n)=z_{n-1}$ for each $n$, where $z_0,z_1,\dotsc$ are points on the unit circle $\partial\mathbb{D}$. Let $\gamma_n=1/|f'(z_n)|$. Then $(F_n)$ is of limit-disc type if and only if 
\[
\sum_{n=1}^\infty \gamma_1\dotsb \gamma_n < +\infty.
\]
Furthermore, if $(F_n)$ is of limit-disc type and converges ideally to a point $q$, then $q\neq z_0$.
\end{lemma}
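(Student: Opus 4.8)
The plan is to reduce this lemma to Lemma \ref{pbe} by conjugating the disc picture to the half-plane picture. Since the points $z_0, z_1, \dots$ all lie on $\partial\mathbb{D}$ and $f_n$ maps $z_n$ to $z_{n-1}$ with $f_n(\mathbb{D})$ internally tangent to $\mathbb{D}$ at $z_{n-1}$, I want to move each tangency point to $\infty$. Concretely, for each $n$ choose a M\"obius transformation $\sigma_n$ mapping $\mathbb{D}$ onto the right half-plane $\mathbb{K}$ and sending $z_n$ to $\infty$; one can normalise so that the derivatives behave well. Then set $h_n = \sigma_{n-1} f_n \sigma_n^{-1}$. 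Because $f_n(\mathbb{D}) \subset \mathbb{D}$ with tangency at $z_{n-1}$, the map $h_n$ satisfies $h_n(\mathbb{K}) \subset \mathbb{K}$ and $h_n(\infty) = \infty$, so by the remark preceding Lemma \ref{pbe}, $h_n(z) = a_n z + b_n$ with $a_n > 0$ and $\operatorname{Re}(b_n) > 0$ (strict, since $f_n(\mathbb{D}) \neq \mathbb{D}$). The composition telescopes: $H_n := h_1 \dotsb h_n = \sigma_0 F_n \sigma_n^{-1}$, so $F_n(\overline{\mathbb{D}}) = \sigma_0^{-1} H_n(\overline{\mathbb{K}})$, and since $\sigma_0$ is a fixed homeomorphism of $\overline{\mathbb{C}}$, $(F_n)$ is of limit-disc type if and only if $(H_n)$ is.

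The key computational step is to identify $a_n$ with $\gamma_n = 1/|f_n'(z_n)|$ (note: the statement writes $\gamma_n = 1/|f'(z_n)|$, which should read $f_n'$). This is a chain-rule/chordal-derivative calculation. Since $\sigma_n$ conjugates the tangency point $z_n$ to $\infty$, the multiplier $a_n = h_n'(\infty)$ — more precisely the expansion factor of $h_n$ at its fixed point $\infty$ — equals the derivative of $f_n$ at its fixed-type boundary point $z_n$, measured correctly. The clean way to see this is via chordal derivatives: $\sigma_n$ and $\sigma_{n-1}$ are isometries between the chordal metrics on $\overline{\mathbb{D}}$-boundary data and the relevant half-plane data only up to conformal factors, but at the tangency points the factor $1/\gamma_n = |f_n'(z_n)| = f_n^\#(z_n)$ (using $|z_n| = |z_{n-1}| = 1$) is exactly the local scaling of $h_n$ near $\infty$, i.e. $a_n$. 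I would verify this by computing both sides explicitly with the normalisation $\sigma_n(z) = (z_n + z)/(z_n - z)$ or similar, or more slickly by noting that conjugating a parabolic-like boundary tangency to the translation-scaling form $az+b$ at $\infty$ forces $a$ to be the boundary derivative. Granting $a_n = \gamma_n$, Lemma \ref{pbe} immediately gives that $(H_n)$ — hence $(F_n)$ — is of limit-disc type if and only if $\sum_n a_1 \dotsb a_n = \sum_n \gamma_1 \dotsb \gamma_n < +\infty$, which is the first assertion.

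For the second assertion, suppose $(F_n)$ is of limit-disc type and converges ideally to $q$. Then $H_n(j') \to \sigma_0(q)$ where $j' = \sigma_0(j) \in \mathbb{K}$ (ideal convergence is preserved under the fixed M\"obius map $\sigma_0$, which is an isometry of $\overline{\mathbb{H}^3}$). By the second assertion of Lemma \ref{pbe}, the limit point $\sigma_0(q)$ is not $\infty$; but $\sigma_0(\infty)$ is not relevant — what we need is that $\sigma_0(q) \neq \infty$, i.e. $q \neq \sigma_0^{-1}(\infty) = z_0$. This is exactly the claim. The main obstacle is the derivative identification $a_n = \gamma_n$: everything else is bookkeeping with conjugation, but one must be careful that the normalisations of the $\sigma_n$ are consistent along the chain so that $h_n = \sigma_{n-1} f_n \sigma_n^{-1}$ genuinely has the form $a_n z + b_n$ with the correct $a_n$, and that $b_n$ has strictly positive real part (this uses $f_n(\mathbb{D}) \neq \mathbb{D}$, which holds because $f_n \in \mathcal{M}(\mathbb{D})$).
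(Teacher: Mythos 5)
Your proposal is correct and follows essentially the same route as the paper: conjugate by the maps $\phi_n(z)=(z+z_n)/(-z+z_n)$ (which fix $j$ and are therefore chordal isometries) to reduce to Lemma~\ref{pbe}, and identify $a_n=\gamma_n$ via the chain rule for chordal derivatives, using $h_n^{\#}(\infty)=1/a_n$ and $f_n^{\#}(z_n)=|f_n'(z_n)|$. The derivative identification you flag as the main obstacle is exactly the two-line chordal-derivative computation the paper carries out, so there is no gap.
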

\begin{proof}
 Let 
\[
\phi_n(z)=\frac{z+z_n}{-z+z_n},
\]
for $n=0,1,2,\dotsc$. One can check that $\phi_n(\mathbb{D})=\mathbb{K}$, $\phi_n(z_n)=\infty$ and $\phi_n(j)=j$, where, as usual,  $j=(0,0,1)$. Let $h_n=\phi_{n-1}f_n\phi_n^{-1}$ and $H_n=h_1\dotsb h_n$, so that $H_n=\phi_0F_n\phi_n^{-1}$. Observe that $h_n$ maps $\mathbb{K}$ strictly inside itself and $h_n(\infty)=\infty$. Also, $H_n(\mathbb{K})=\phi_0F_n(\mathbb{D})$, so $(F_n)$ is of limit-disc type if and only if $(H_n)$ is of limit-disc type. Now, the maps $\phi_n$ are isometries of $\overline{\mathbb{C}}$ with the chordal metric, so using the chain rule for chordal derivatives we obtain 
\[
h_n^\#(\infty)=(\phi_{n-1}f_n\phi_n^{-1})^\#(\infty)=f_n^\#(\phi_n^{-1}(\infty))=f_n^\#(z_n)=|f_n'(z_n)|.
\]
Also, writing $h_n(z)=a_nz+b_n$, we have $h_n^\#(\infty)=1/a_n$. We can now apply the first part of Lemma~\ref{pbe} to see that $(H_n)$ (and $(F_n)$) are of limit-disc type if and only if $\sum \gamma_1\dotsb \gamma_n < +\infty$.

To prove the second assertion, suppose that $(F_n)$ is of limit-disc type and converges ideally to a point $q$. Since $H_n(j)=\phi_0F_n(j)$, we see that $(H_n)$ converges ideally to $\phi_0(q)$. The second part of Lemma~\ref{pbe} tells us that $\phi_0(q)\neq\infty$. Hence $q\neq \phi_0^{-1}(\infty)=z_0$, as required.
\end{proof}

We can now prove Theorem~\ref{acu}.

\begin{proof}[Proof of Theorem~\ref{acu}]
Statement (i) of Theorem~\ref{acu} is equivalent to the assertion that $(F_n)$ is of limit-tangent type. Therefore Theorem~\ref{dgi} tells us that if $(F_n)$ is of limit-disc type, then statement~(i) holds. Thus, we have only to show that, on the assumption that statement~(i) holds, $(F_n)$ is of limit-disc type if and only if statement~(ii) holds.  

Let $z_n=\alpha_{f_n}$, for $n=1,2,\dotsc$, and $z_0=\beta_{f_1}$. There is a positive integer $m$ such that
\[
f_n(z_n)=f_n(\alpha_{f_n})=\beta_{f_n}=\alpha_{f_{n-1}}=z_{n-1},
\]
for $n>m$. We can now apply the first assertion of Lemma~\ref{abc} (ignoring the first $m$ terms of $(f_n)$) to deduce that $(F_n)$ is of limit-disc type if and only if $\sum \gamma_{f_1}\dotsb \gamma_{f_n}<+\infty$, as required.
\end{proof}

\section{Hausdorff dimension of sequences of limit-disc type}

In this section we prove Theorems~\ref{can} and \ref{cun}.

For each positive integer $b$, we define $\Omega_b$ to be the set of all sequences $x_1,x_2,\dotsc$ with $x_i\in\{0,\dots,b-1\}$. Given sequences $(x_n)$ and $(y_n)$ in $\Omega_b$, we define 
\[
d\big((x_n),(y_n)\big)= \frac{1}{b^k},\quad\text{where}\quad k= \min\{ i\in\mathbb{N}\,:\, x_i\neq y_i\}.
\]
Then $(\Omega_b,d)$ is a compact metric space; it is essentially the same as the metric space $(\Omega(\mathcal{F}),d)$ defined in the introduction, but with a different underlying set. We denote by $\text{dim}\, X$ the Hausdorff dimension of a subset $X$ of $\Omega_b$ with respect to the metric $d$.

Let $\Gamma$ be a directed graph with vertices $\{0,\dots,b-1\}$. There is at most one directed edge between each pair of vertices $i$ and $j$, where possibly $i=j$. We denote the spectral radius of the adjacency matrix of $\Gamma$ by $\rho(\Gamma)$. Let $\Gamma^\infty$ denote the set of infinite paths in $\Gamma$; specifically, $\Gamma^\infty$ consists of sequences $(x_n)$ from $\Omega_b$ such that there is a directed edge in $\Gamma$ from $x_n$ to $x_{n+1}$, for each  $n$. The following result is well known (see, for example, \cite[Corollary~2.9]{Fr1998}).

\begin{theorem}\label{coa}
Suppose that $\Gamma$ is a directed graph with vertices $\{0,1,\dots,b-1\}$, and suppose that $\Gamma$ contains a cycle. Then 
\[
\textnormal{dim}\, \Gamma^\infty = \frac{\log \rho(\Gamma)}{\log b}.
\]
\par\vspace{-2.2\baselineskip}
\qed
\end{theorem}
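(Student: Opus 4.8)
The plan is to set $s = \log \rho(\Gamma)/\log b$ and prove the two inequalities $\dim \Gamma^\infty \leq s$ and $\dim \Gamma^\infty \geq s$ separately, using standard tools from the thermodynamic formalism of subshifts of finite type. Write $A$ for the adjacency matrix of $\Gamma$. First I would record the basic combinatorics: the number $N_n$ of admissible paths $x_1\to\dots\to x_n$ equals the sum of all entries of $A^{n-1}$, and since the entry-sum is a norm on the space of matrices, Gelfand's formula gives $\lim_n N_n^{1/n}=\rho(\Gamma)$. I would also note that the cycle hypothesis forces $\rho(\Gamma)\geq 1$: a cycle of length $\ell$ through a vertex $i$ gives $(A^{k\ell})_{ii}\geq 1$ for every $k$, so $\operatorname{tr}(A^{k\ell})\geq 1$ does not tend to $0$ as $k\to\infty$, which rules out $\rho(\Gamma)<1$. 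In particular $s\geq 0$, so the formula is meaningful and the relation $\rho(\Gamma)^{-n}=b^{-sn}$ holds.

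The first key observation about the metric is that balls are cylinders: $d\big((x_n),(y_n)\big)\leq b^{-(n+1)}$ exactly when the two sequences agree on their first $n$ coordinates, so the closed ball of radius $r$ with $b^{-(n+1)}\leq r<b^{-n}$ is precisely the rank-$n$ cylinder $[x_1,\dots,x_n]$, which has diameter at most $b^{-(n+1)}\leq b^{-n}$. For the upper bound I would cover $\Gamma^\infty$ by the nonempty rank-$n$ cylinders; these correspond to admissible paths, so there are $N_n$ of them. Fix $t>s$. Then $\sum(\operatorname{diam})^t\leq N_n b^{-tn}=N_n(b^t)^{-n}$, and since $\limsup N_n^{1/n}=\rho(\Gamma)<b^t$, this tends to $0$ as $n\to\infty$. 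Hence $\mathcal{H}^t(\Gamma^\infty)=0$ for every $t>s$, giving $\dim\Gamma^\infty\leq s$.

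The lower bound is where the work lies, and the main obstacle is that $\Gamma$ need not be strongly connected, so $A$ need not be irreducible and the Perron eigenvector need not be positive. I would get around this by reducing to a single strongly connected component. Using the Frobenius normal form, $\rho(\Gamma)$ is the maximum of $\rho(A_C)$ over the strongly connected components $C$, where $A_C$ is the principal submatrix on $C$; trivial components (a single vertex with no loop) contribute spectral radius $0$, so since $\rho(\Gamma)\geq 1$ the maximum is attained on a nontrivial component $C^*$, on which $A_{C^*}$ is irreducible with $\rho(A_{C^*})=\rho(\Gamma)$. I would then restrict attention to the subset $\Gamma_{C^*}^\infty\subseteq\Gamma^\infty$ of paths that remain in $C^*$, where Perron--Frobenius is available.

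On this irreducible piece I would build the Parry measure. Perron--Frobenius supplies positive left and right eigenvectors $u,v$ with $A_{C^*}v=\rho(\Gamma)v$ and $u^\top A_{C^*}=\rho(\Gamma)u^\top$; setting transition probabilities $p_{ij}=A_{ij}v_j/(\rho(\Gamma)v_i)$ and stationary distribution $\pi_i\propto u_iv_i$ defines a shift-invariant Markov measure $\mu$ on $\Gamma_{C^*}^\infty$. A telescoping computation gives $\mu\big([x_1,\dots,x_n]\big)=\pi_{x_1}(v_{x_n}/v_{x_1})\rho(\Gamma)^{-(n-1)}$ for each admissible cylinder, so by positivity and finiteness of the vectors there are constants $0<c_1\leq c_2$ with $c_1\rho(\Gamma)^{-n}\leq\mu\big([x_1,\dots,x_n]\big)\leq c_2\rho(\Gamma)^{-n}$. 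Combining this with the ball-equals-cylinder correspondence and the identity $\rho(\Gamma)^{-n}=b^{-sn}$, every ball $B$ of radius $r\asymp b^{-n}$ satisfies $\mu(B)\leq c_2\rho(\Gamma)^{-n}\leq C r^s$. The mass distribution principle then yields $\dim\Gamma^\infty\geq\dim(\operatorname{supp}\mu)\geq s$, and together with the upper bound this completes the proof.
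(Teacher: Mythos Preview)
Your argument is correct and follows the standard route (cylinder covers for the upper bound; reduction to an irreducible component and the Parry measure plus the mass distribution principle for the lower bound). Note, however, that the paper does not actually prove this statement: it is quoted as a known result with a reference to \cite{Fr1998}*{Corollary~2.9} and marked with a terminal \qed. So there is nothing in the paper to compare your proof against; you have supplied a self-contained proof where the authors simply cite the literature. Two very small remarks: the entry-sum functional you invoke is indeed a submultiplicative matrix norm, so Gelfand's formula applies as you claim; and in the ball--cylinder estimate, the constant in $\mu(B)\leq C r^s$ picks up an extra factor of $\rho(\Gamma)$ (from $\rho^{-n}=\rho\cdot\rho^{-(n+1)}\leq \rho\, r^s$), which is harmless.
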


Theorem~\ref{can} follows quickly from this result.

\begin{proof}[Proof of Theorem~\ref{can}]
If $T(\mathcal{F})$ does \emph{not} contain a cycle, then there are no infinite paths in $T(\mathcal{F})$, so there are no composition sequences of limit-disc type. Consequently, $\text{dim}\,\Lambda(\mathcal{F})=0$. Suppose instead that $T(\mathcal{F})$ does contain a cycle.

Let $F_n=f_1\dotsb f_n$ be any composition sequence generated by $\mathcal{F}$. We are given that $\gamma_f<1$ for each $f\in \mathcal{F}$, from which it follows that $\sum \gamma_{f_1}\dotsb \gamma_{f_n} <+\infty$ (because $\mathcal{F}$ is finite). We can now see from Theorem~\ref{acu} that 
\[
\Lambda(\mathcal{F}) = \bigcup_{k=1}^\infty Y_k,
\]
where $Y_k$ comprises those sequences $(f_n)$ in $\mathcal{F}$ for which there is a directed edge in $T(\mathcal{F})$ from $f_n$ to $f_{n+1}$ for $n\geq k$. Observe that $Y_1$ is the set of all infinite paths in $T(\mathcal{F})$.

Define $b=|\mathcal{F}|$, and choose any one-to-one map from $\mathcal{F}$ to $\{0,\dots,b-1\}$. This map induces an isometry of the metric spaces $(\Omega(\mathcal{F}),d)$ and $(\Omega_b,d)$. We can now apply Theorem~\ref{coa} to give $\text{dim}\, Y_1 = (\log \rho(\mathcal{F})) /\log b$. Furthermore, each set $Y_1\setminus Y_k$ is finite, so $\text{dim}\, Y_k=\text{dim}\, Y_1$. Finally, because Hausdorff dimension is countably stable, we obtain
\[
\text{dim}\, \Lambda(\mathcal{F}) = \text{dim}\,\bigcup_{k=1}^\infty Y_k = \sup_{k\in\mathbb{N}} \text{dim}\, Y_k =  \frac{\log \rho(\mathcal{F})}{\log b}.\qedhere
\]
\end{proof}

The remainder of this section is concerned with proving Theorem~\ref{cun}. For most of this part we work with the unit interval $[0,1]$ and the Euclidean metric rather than with the metric space $(\Omega(\mathcal{F}),d)$. Only at the very end do we switch back to the latter metric space. We denote the Hausdorff dimension of a subset $X$ of $[0,1]$ by $\text{dim}\, X$, as usual. Throughout, we let $b$ be a fixed positive integer (which later will be the order of $\mathcal{F}$).

We make use of two well-known lemmas (see, for example, \cite{Fa1997}*{Proposition~2.3}  and \cite{Bi1965}*{equation~14.4}). The first lemma is part of a result known as Billingsley's lemma. In this lemma we use the notation $I_n(x)$ to denote the $n$th generation, half-open $b$-adic interval of the form $\left[\frac{j-1}{b^n},\frac{j}{b^n}\right)$ containing~$x$, and we let $|I_n(x)|$ denote the width $1/b^n$ of this interval.

\begin{lemma}\label{bil}
Suppose that $\mu$ is a finite Borel measure on $[0,1]$ and $A$ is a Borel subset of $[0,1]$ with $\mu(A)>0$. If 
\[
 \liminf_{n \rightarrow \infty} \frac{\log \mu(I_{n}(x))}{\log | I_{n}(x)|} \leq \lambda, 
\]
 for all $x\in A$, then $\textnormal{dim}\, A\leq \lambda$.\qed
\end{lemma}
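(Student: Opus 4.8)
The plan is to run the standard covering argument behind Billingsley's lemma, converting the pointwise bound on the $\mu$-measure of small $b$-adic intervals into an upper bound on the Hausdorff pre-measure of $A$ that holds at every scale. Fix $\varepsilon>0$. The first step is a sign manipulation: since $|I_n(x)|=b^{-n}<1$ we have $\log|I_n(x)|<0$, so the hypothesis $\liminf_{n\to\infty}\log\mu(I_n(x))/\log|I_n(x)|\leq\lambda$ means that for each $x\in A$ there are infinitely many $n$ with $\log\mu(I_n(x))\geq(\lambda+\varepsilon)\log|I_n(x)|$, that is, $\mu(I_n(x))\geq|I_n(x)|^{\lambda+\varepsilon}$. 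In particular such ``good'' $b$-adic intervals around $x$ occur with arbitrarily small width.

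Next, fix $\delta>0$ and let $\mathcal{C}$ be the collection of all $b$-adic intervals $I$ with $|I|<\delta$, $\mu(I)\geq|I|^{\lambda+\varepsilon}$, and $I=I_n(x)$ for some $x\in A$; by the previous step $\mathcal{C}$ covers $A$. Now I would use the fact that any two $b$-adic intervals are either nested or disjoint: each $I\in\mathcal{C}$ lies in a unique largest member of $\mathcal{C}$ (the $b$-adic intervals containing $I$ are finite in number and totally ordered by inclusion), and the family $\{I_j\}$ of these maximal members is pairwise disjoint and still covers $A$. With disjointness the estimate is immediate,
\[
\sum_j |I_j|^{\lambda+\varepsilon}\;\leq\;\sum_j \mu(I_j)\;=\;\mu\Big(\bigcup_j I_j\Big)\;\leq\;\mu([0,1])\;<\;+\infty,
\]
using $I_j\in\mathcal{C}$ for the first inequality, countable additivity together with disjointness for the equality, and monotonicity and finiteness of $\mu$ for the last inequality.

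Since $\{I_j\}$ covers $A$ by sets of diameter less than $\delta$, this bounds the $\delta$-approximating $(\lambda+\varepsilon)$-dimensional Hausdorff measure of $A$ by $\mu([0,1])$, a bound independent of $\delta$. Letting $\delta\to0$ gives $\mathcal{H}^{\lambda+\varepsilon}(A)\leq\mu([0,1])<+\infty$, whence $\dim A\leq\lambda+\varepsilon$; since $\varepsilon>0$ is arbitrary, $\dim A\leq\lambda$. (The hypotheses that $A$ is Borel and $\mu(A)>0$ play no role in this half of the statement; they are needed only for the complementary lower bound in the full form of Billingsley's lemma.)

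The one step needing genuine care, and the one I expect to be the main obstacle, is the extraction of the disjoint subcover $\{I_j\}$ from $\mathcal{C}$: one must check that maximal members of $\mathcal{C}$ exist, that two distinct maximal members are actually disjoint rather than merely non-nested, and that they still cover all of $A$. All three facts follow cleanly from the $b$-adic nesting structure, but they should be written out explicitly; everything else is routine bookkeeping with the definition of Hausdorff measure.
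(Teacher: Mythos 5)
Your proof is correct and complete. Note that the paper itself offers no proof of this lemma --- it is quoted as a known result (part of Billingsley's lemma, with a citation to Falconer's \emph{Techniques in fractal geometry}, Proposition 2.3) --- so there is no argument in the paper to compare against; your covering argument, with the sign flip from $\log|I_n(x)|<0$, the extraction of a disjoint subcover of maximal $b$-adic intervals, and the resulting uniform bound $\mathcal{H}^{\lambda+\varepsilon}_{\delta}(A)\leq\mu([0,1])$, is exactly the standard proof of this half of the lemma, and your observation that the hypotheses $\mu(A)>0$ and Borel measurability of $A$ are not needed for the upper bound is also correct.
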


Given $x\in [0,1]$, we define $x(j)$ to be the $j$th digit in the $b$-ary expansion of $x$. (To ensure that this definition is unambiguous, we forbid infinitely-recurring entries $b-1$ in the expansion.) For $i=0,\dots,b-1$, we let $N_i(x,n)$ denote the number of those digits $x(1),\dots,x(n)$ that are equal to $i$. We also need the concept of a  \emph{probability vector}, which is a vector $(p_0, \ldots , p_{b-1})$ such that $0 \leq p_{i} \leq 1$ for each $i$ and $p_0+\dots +p_{b-1}=1$. 

\begin{lemma}\label{bol}
Suppose that $p=(p_0, \ldots , p_{b-1})$ is a probability vector. Let $X_p$ be the set of those numbers $x$ in $[0,1]$ such that 
\[
\lim_{n\to\infty} \frac{N_i(x,n)}{n} = p_i,
\]
for $i=0,\dots,b-1$. Then
\[ 
\textnormal{dim}\, X_p = -\frac{1}{\log b}\sum_{i=0}^{b-1} p_{i} \log p_{i}. 
\]
\par\vspace{-2.6\baselineskip}
\qed
\end{lemma}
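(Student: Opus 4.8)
The plan is to apply Billingsley's lemma to the Bernoulli measure attached to the vector $p$; this is the classical Besicovitch--Eggleston computation. First I would build a Borel probability measure $\mu$ on $[0,1]$ by assigning to each $n$th generation $b$-adic interval $I$ whose points have $b$-ary expansions beginning $i_1,\dots,i_n$ the mass $\mu(I)=p_{i_1}\dotsb p_{i_n}$. This prescription is consistent under subdivision because $p_0+\dots+p_{b-1}=1$, so it extends to a Borel measure with $\mu([0,1])=1$; equivalently, $\mu(I_n(x))=\prod_{i=0}^{b-1}p_i^{N_i(x,n)}$ for all $x$ and $n$ (using the convention $0\log 0=0$; when some $p_i$ vanishes, $\mu$ is simply carried by the sequences that omit the digit $i$). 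Under $\mu$ the successive $b$-ary digits of a point are independent, the $j$th being equal to $i$ with probability $p_i$; hence the strong law of large numbers gives $N_i(x,n)/n\to p_i$ for every $i$ and $\mu$-almost every $x$, so $\mu(X_p)=1$.

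Next I would read off the local dimension of $\mu$. Since $|I_n(x)|=b^{-n}$, for every $x$ we have
\[
\frac{\log\mu(I_n(x))}{\log|I_n(x)|}=-\frac{1}{\log b}\sum_{i=0}^{b-1}\frac{N_i(x,n)}{n}\,\log p_i,
\]
and when $x\in X_p$ the right-hand side converges, as $n\to\infty$, to $\lambda:=-\frac{1}{\log b}\sum_{i=0}^{b-1}p_i\log p_i$. In particular the $\liminf$ occurring in Lemma~\ref{bil} equals $\lambda$ at every point of $X_p$, and $\mu(X_p)=1>0$, so Lemma~\ref{bil} gives $\dim X_p\leq\lambda$.

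For the reverse inequality I would use the complementary half of Billingsley's lemma (available from the same reference \cite{Bi1965}): if $A$ is a Borel set with $\mu(A)>0$ and $\liminf_{n\to\infty}\log\mu(I_n(x))/\log|I_n(x)|\geq\lambda$ for every $x\in A$, then $\dim A\geq\lambda$. The displayed computation shows this hypothesis is met with $A=X_p$, so $\dim X_p\geq\lambda$, and combining the two bounds gives $\dim X_p=\lambda$, as claimed.

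I expect the lower bound to be the only genuinely delicate point. The upper bound is immediate from the form of Billingsley's lemma already recorded as Lemma~\ref{bil}, whereas the lower bound is in essence a mass distribution (Frostman) argument; the work packaged inside the second half of Billingsley's lemma is the passage from $b$-adic intervals to arbitrary small intervals, namely the observation that an interval of length comparable to $b^{-n}$ meets only a bounded number of $n$th generation $b$-adic intervals, so its $\mu$-measure is at most a constant multiple of the largest of finitely many quantities $\mu(I_n(\cdot))$, each of which is at most $b^{-n(\lambda-\varepsilon)}$ once $n$ is large. Rather than redo this routine estimate I would cite the standard statement.
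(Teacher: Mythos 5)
The paper offers no proof of this lemma: it is the classical Besicovitch--Eggleston theorem, recorded with a tombstone and delegated to the citation of \cite{Bi1965}*{equation~14.4}. Your argument is the standard proof of that result, and it is correct whenever every $p_i$ is strictly positive --- which is the only case the paper actually needs, since in the proof of Theorem~\ref{dit} the probability vector is $\bigl(\gamma_i^{-s}/\sum_j\gamma_j^{-s}\bigr)_i$ with all entries positive. The construction of the Bernoulli measure, the strong law giving $\mu(X_p)=1$, the local-dimension computation, and the appeal to both halves of Billingsley's lemma are all as they should be.

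The one point you gloss over is the degenerate case permitted by the statement, where some $p_i=0$. Your lower bound survives: for every $x\in X_p$ one still has $\liminf_n \log\mu(I_n(x))/\log|I_n(x)|\geq\lambda$ (the ratio is $+\infty$ at points whose expansion uses a zero-probability digit, and equals $\lambda$ at the rest), and $\mu(X_p)=1$ still holds. But your upper bound does not go through as written: $X_p$ constrains only asymptotic frequencies, so a point $x\in X_p$ may use a zero-probability digit somewhere in its expansion, in which case $\mu(I_n(x))=0$ for all large $n$, the ratio $\log\mu(I_n(x))/\log|I_n(x)|$ is $+\infty$, and the hypothesis of Lemma~\ref{bil} fails at $x$; such points can form an uncountable subset of $X_p$, so they cannot simply be discarded. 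The standard repair is to run the upper bound with the Bernoulli measure $\mu_\varepsilon$ of a perturbed vector having all entries positive: for $x\in X_p$ the quantity $\log\mu_\varepsilon(I_n(x))/\log|I_n(x)|$ converges to $-\frac{1}{\log b}\sum_i p_i\log p_i^{\varepsilon}$, which tends to $\lambda$ as $\varepsilon\to 0$, giving $\textnormal{dim}\,X_p\leq\lambda+o(1)$. With that patch your proof is complete in the stated generality.
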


Given positive numbers $\gamma_0,\dots,\gamma_{b-1}$, we define, for $x\in[0,1]$,
\[ 
Q_n(x)= \frac{1}{n}\sum_{i=0}^{b-1} N_{i}(x,n)\log \gamma_{i}.
 \]
 
\begin{lemma}\label{vby}
If $\limsup_{n \rightarrow \infty} Q_n(x)<0$, then the series 
\[
\sum_{n=1}^{\infty} \gamma_{x(1)}\dotsb\gamma_{x(n)} 
\] 
converges, and if $\limsup_{n \rightarrow \infty} Q_n(x)>0$, then the series diverges.
\end{lemma}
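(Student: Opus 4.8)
The plan is to relate the partial products $\gamma_{x(1)}\dotsb\gamma_{x(n)}$ to the quantities $Q_n(x)$ by taking logarithms, and then to apply a root-test argument. First I would observe that
\[
\log\big(\gamma_{x(1)}\dotsb\gamma_{x(n)}\big) = \sum_{k=1}^n \log\gamma_{x(k)} = \sum_{i=0}^{b-1} N_i(x,n)\log\gamma_i = n\,Q_n(x),
\]
where the middle equality simply groups the terms $\log\gamma_{x(k)}$ according to the value of the digit $x(k)$. Hence $\gamma_{x(1)}\dotsb\gamma_{x(n)} = \exp\big(n Q_n(x)\big)$, and the $n$th root of the $n$th term of the series is $\exp(Q_n(x))$.

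For the convergence half, suppose $\limsup_n Q_n(x) = -c < 0$. Then there is an integer $N$ such that $Q_n(x) < -c/2$ for all $n\geq N$, so for such $n$ the $n$th term is at most $\exp(-cn/2) = (e^{-c/2})^n$, which is a geometric series with ratio $e^{-c/2}<1$; comparison with this geometric series gives convergence. For the divergence half, suppose $\limsup_n Q_n(x) = c > 0$. Then $Q_n(x) > c/2$ for infinitely many $n$, and for each such $n$ the $n$th term of the series exceeds $\exp(cn/2) \geq 1$; since infinitely many terms are bounded below by $1$, the terms do not tend to $0$ and the series diverges.

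There is no real obstacle here — the result is essentially the root test dressed up in the notation of digit-counting functions, and the only thing to be slightly careful about is that $\limsup Q_n(x) < 0$ forces an eventual uniform bound $Q_n(x) < -c/2$ (using that the $\limsup$ is strictly negative, not merely that every $Q_n(x)$ is negative), whereas in the divergent case one only gets a bound along a subsequence, which is all that is needed since the terms are nonnegative. I would phrase the two cases as above using the substitution $\gamma_{x(1)}\dotsb\gamma_{x(n)}=e^{nQ_n(x)}$ and conclude.
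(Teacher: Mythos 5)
Your proof is correct and follows essentially the same route as the paper: both rest on the identity $\gamma_{x(1)}\dotsb\gamma_{x(n)} = e^{nQ_n(x)}$, so that $\limsup_n (\gamma_{x(1)}\dotsb\gamma_{x(n)})^{1/n} = e^{\limsup_n Q_n(x)}$, and the paper then simply invokes Cauchy's root test where you reprove it inline via geometric comparison and the divergence-to-zero test.
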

\begin{proof}
Applying Cauchy's root test, we see that the series converges if $\limsup {a_n}^{1/n}<1$ and diverges if  $\limsup {a_n}^{1/n}>1$, where $a_n=\gamma_{x(1)}\dotsb\gamma_{x(n)} $. The results follows by taking the logarithm of each side of each of these inequalities, as
\[
\log\left(\limsup_{n\to\infty} {a_n}^{1/n} \right)=\limsup_{n\to\infty} \frac{1}{n}\sum_{j=1}^n\log \gamma_{x(j)} = \limsup_{n\to\infty} Q_n(x).\qedhere
\]
\end{proof}

We can now state the principal result of this section.

\begin{theorem}\label{dit}
Suppose that at least one of the positive numbers $\gamma_0,\dots,\gamma_{b-1}$ is greater than $1$, and at least one is less than $1$.  Let $X$ denote the set of numbers $x$ in $[0,1]$ for which
\[
 \sum_{n=1}^{\infty} \gamma_{x(1)}\dotsb\gamma_{x(n)} <+\infty.
\]
Then
\[
\textnormal{dim}\, X=  \min_{s \geq 0}  \frac{\log \left(\sum_{i=0}^{b-1} \gamma_{i}^{-s}\right)}{\log b}. 
\]
\end{theorem}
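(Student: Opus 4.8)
\emph{Overall approach.} The plan is to prove matching upper and lower bounds for $\dim X$, both governed by the function $\phi(s)=\sum_{i=0}^{b-1}\gamma_i^{-s}$. Since at least one $\gamma_i$ exceeds $1$ and at least one is less than $1$, $\phi$ is finite and strictly convex on $\mathbb{R}$ with $\phi(s)\to+\infty$ as $s\to\pm\infty$; hence $\log\phi$ attains its minimum over $[0,+\infty)$ at a unique point $s^*\geq 0$, and the target value is $\dim X=(\log\phi(s^*))/\log b$. Throughout I write $p^{(s)}$ for the probability vector with entries $p^{(s)}_i=\gamma_i^{-s}/\phi(s)$ and $H(p)=-\sum_i p_i\log p_i$.

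\emph{Upper bound.} If $x\in X$ then $\gamma_{x(1)}\dotsb\gamma_{x(n)}\to 0$, so this product is $\leq 1$ for all large $n$; thus $X\subseteq\bigcup_{N\geq 1}X_N$, where $X_N=\{x:\gamma_{x(1)}\dotsb\gamma_{x(n)}\leq 1\text{ for all }n\geq N\}$. Fix $N$ and any $s\geq 0$. For $n\geq N$, the set $X_N$ is contained in the union of those generation-$n$ $b$-adic intervals whose digit string $(i_1,\dots,i_n)$ satisfies $\gamma_{i_1}\dotsb\gamma_{i_n}\leq 1$; as $(\gamma_{i_1}\dotsb\gamma_{i_n})^{-s}\geq 1$ for every such string, the number of them is at most $\sum_{(i_1,\dots,i_n)}\gamma_{i_1}^{-s}\dotsb\gamma_{i_n}^{-s}=\phi(s)^n$. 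So $X_N$ is covered by at most $\phi(s)^n$ sets of diameter $b^{-n}$, which forces $\dim X_N\leq(\log\phi(s))/\log b$, and hence $\dim X\leq(\log\phi(s))/\log b$ by countable stability of Hausdorff dimension; taking $s=s^*$ gives the desired bound. (One could instead run Lemma~\ref{bil} with a Bernoulli measure, but the measure attached to $p^{(s^*)}$ assigns $X$ measure zero — under it $\sum_{j\leq n}\log\gamma_{x(j)}$ is a recurrent random walk — so one would first have to perturb $s^*$ upward; the covering argument sidesteps this.)

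\emph{Lower bound.} Let $p$ be any probability vector with $\sum_i p_i\log\gamma_i<0$. For $x$ in the frequency set $X_p$ of Lemma~\ref{bol} we have $Q_n(x)\to\sum_i p_i\log\gamma_i<0$, so $\sum_n\gamma_{x(1)}\dotsb\gamma_{x(n)}$ converges by Lemma~\ref{vby}; hence $X_p\subseteq X$ and Lemma~\ref{bol} gives $\dim X\geq\dim X_p=H(p)/\log b$. It therefore remains to show that the supremum of $H(p)$ over probability vectors $p$ with $\sum_i p_i\log\gamma_i<0$ equals $\log\phi(s^*)$, and this variational computation is the heart of the matter; the main obstacle is that the extremal vector lies exactly on the boundary $\sum_i p_i\log\gamma_i=0$, outside the open region over which $X_p\subseteq X$ holds. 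The inequality direction is routine: non-negativity of the relative entropy $\sum_i p_i\log(p_i/p^{(s)}_i)$ rearranges to $H(p)\leq s\sum_i p_i\log\gamma_i+\log\phi(s)$ for every probability vector $p$ and every $s\geq 0$, so $\sum_i p_i\log\gamma_i\leq 0$ forces $H(p)\leq\log\phi(s)$ for all $s\geq 0$, hence $H(p)\leq\log\phi(s^*)$. For the matching value, equality in that inequality at $p=p^{(s^*)}$, $s=s^*$ gives $H(p^{(s^*)})=s^*\sum_i p^{(s^*)}_i\log\gamma_i+\log\phi(s^*)$, which equals $\log\phi(s^*)$ since either $s^*=0$ or else $\phi'(s^*)=0$ (making $\sum_i p^{(s^*)}_i\log\gamma_i=0$); thus $p^{(s^*)}$ lies in the closed region $\sum_i p_i\log\gamma_i\leq 0$ and attains the bound. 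To realise this value within the \emph{open} region — which is what $X_p\subseteq X$ needs — I would perturb: fix a probability vector $q$ supported on an index $i$ with $\gamma_i<1$, and along $p_t=(1-t)p^{(s^*)}+tq$, $t\in(0,1]$, one has $\sum_i(p_t)_i\log\gamma_i<0$ while $H(p_t)\to H(p^{(s^*)})$ as $t\to0^+$ by continuity. Combining the two bounds yields $\dim X=(\log\phi(s^*))/\log b=\min_{s\geq 0}\bigl(\log\sum_{i}\gamma_i^{-s}\bigr)/\log b$. The routine omissions are the measurability of $X$ (an $F_\sigma$ set) and the irrelevance to the dimension of the countable set of $x$ with terminating $b$-ary expansions.
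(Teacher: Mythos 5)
Your argument is correct, and it reaches the same two bounds as the paper but by partly different routes. The lower bound is essentially the paper's: both of you feed frequency sets into Lemma~\ref{bol} and use Lemma~\ref{vby} to check containment in $X$; the only difference is organisational --- the paper works along the explicit one-parameter family $X_s$ with $s\downarrow s_0$ and reads off $\dim X_s=(g(s)-sg'(s))/\log b$ directly, whereas you set up the variational problem $\sup\{H(p):\sum_i p_i\log\gamma_i<0\}$, solve it with Gibbs' inequality, and approach the boundary optimum $p^{(s^*)}$ along a line segment toward a mass concentrated on some $\gamma_i<1$. These are interchangeable; your version makes the thermodynamic interpretation explicit, the paper's is shorter. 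The upper bound is where you genuinely diverge: the paper introduces the auxiliary set $Y=\{x:\limsup_n Q_n(x)\leq 0\}\supseteq X$ and applies Billingsley's lemma (Lemma~\ref{bil}) with the Bernoulli measure $\mu_s$, while you cover $X_N=\{x:\gamma_{x(1)}\dotsb\gamma_{x(n)}\leq 1\text{ for all }n\geq N\}$ by the at most $\phi(s)^n$ generation-$n$ intervals whose digit products are $\leq 1$ and conclude by countable stability. Your covering argument is more elementary (no measure theory beyond the definition of Hausdorff measure) and is a clean instance of the standard pressure bound. One small correction to your parenthetical: the paper does not need to perturb $s^*$ upward to make Billingsley work, because it applies the lemma to $Y$ rather than to $X$, and $\mu_{s_0}(Y)=1$ by the strong law ($Q_n\to 0$ a.s.\ under $\mu_{s_0}$); so the recurrence issue you flag affects $X$ but not the set the paper actually measures.
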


Before proving this theorem, we observe that the expression for $\text{dim}\,X$ can be written more concisely using the real function $q(s) =  \sum_{i=0}^{b-1} \gamma_{i}^{-s}$. This function satisfies the equation $q{''}(s)=\sum_{i=0}^{b-1} (\log\gamma_i)^2\gamma_{i}^{-s}>0$, so $q$ is strictly convex. Since $q(s)\to +\infty$ as $s\to -\infty$ and as $s\to +\infty$, we see that $q$ has a unique local minimum at a point $s_0$, and $s_0$ is a global minimum of $q$. Now let $g(s)=\log q(s)$. Then $\text{dim}\, X = g(s_0)/\log b$ if $s_0\geq 0$, and $\text{dim}\, X = g(0)/\log b =1$ if $s_0<0$.

In the argument that follows, we also use the function $g'(s)$, which is strictly increasing on the real line and satisfies $g'(s_0)=0$.

\begin{proof}[Proof of Theorem~\ref{dit}]
We begin by establishing a lower bound for $\text{dim}\, X$. Let $X_s$ denote the set of those numbers $x$ in $[0,1]$ such that
\[
\lim_{n\to\infty} \frac{N_i(x,n)}{n} = \frac{\gamma_i^{-s}}{\sum_j \gamma_j^{-s}},
\]
for $i=0,\dots,b-1$. If $x\in X_s$, then 
\[
\lim_{n\to\infty} Q_n(x) = \frac{\sum_i \gamma_i^{-s}\log\gamma_i}{\sum_j \gamma_j^{-s}}=-g'(s).
\]
If $s>s_0$, then $g'(s)>0$, so $X_s\subset X$, by Lemma~\ref{vby}. Therefore, applying Lemma~\ref{bol}, we obtain
\[
\text{dim}\, X \geq \text{dim}\, X_s = \frac{g(s)}{\log b}-\frac{sg'(s)}{\log b}.
\]
 If $s_0<0$, then $\text{dim}\, X \geq g(0)/\log b =1$, so $\text{dim}\,X=1$. Henceforth we assume that $s_0\geq 0$. In this case, as $g'(s_0)=0$, we obtain $\text{dim}\, X \geq g(s_0)/\log b$.
 
It remains to prove that $\text{dim}\, X \leq g(s_0)/\log b$ when $s_0\geq 0$. To this end, we define $\mu_s$ to be the probability measure given by 
\[
\mu_s(I_n(x)) =p_{x(1)}\dotsb p_{x(n)},\quad\text{where}\quad (p_0,\dots,p_{b-1}) = \frac{1}{\sum_i \gamma_i^{-s}}(\gamma_0^{-s},\dots,\gamma_{b-1}^{-s})
\]
is the probability vector used already, and $I_n(x)$ is the $n$th generation $b$-adic interval containing~$x$. 

Let $Y$ be the set of those numbers $x$ in $[0,1]$ such that $\limsup_{n\to\infty}Q_n(x)\leq 0$. Then $X\subset Y$, by Lemma~\ref{vby}. One can check that, given $x\in Y$,
\begin{align*}
\liminf_{ n \rightarrow \infty} \frac{\log \mu_{s}(I_{n}(x))}{\log |I_{n}(x)|} 
&= \frac{g(s)}{\log b} +  \frac{s}{\log b} \liminf_{n \rightarrow \infty} Q_n(x)\\
&\leq \frac{g(s)}{\log b} +  \frac{s}{\log b} \limsup_{n \rightarrow \infty} Q_n(x)\\
&\leq \frac{g(s)}{\log b},
\end{align*}
for all $s\geq s_0$. Lemma~\ref{bil} now tells us that 
\[
\text{dim}\, X \leq \text{dim}\, Y \leq \frac{g(s)}{\log b}. 
\]
Therefore $\text{dim}\, X \leq g(s_0)/\log b$, as required.
\end{proof}

Finally, we prove Theorem~\ref{cun}.

\begin{proof}[Proof of Theorem~\ref{cun}]
If $\gamma_f\geq 1$ for all $f\in\mathcal{F}$, then condition (ii) of Theorem~\ref{acu} is not satisfied for any composition sequence generated by $\mathcal{F}$, so $\Lambda(\mathcal{F})=\emptyset$. Hence $\text{dim}\,\Lambda(\mathcal{F})=0$. On the other hand, if $\gamma_f<1$ for all $f\in\mathcal{F}$, then conditions~(i) and~(ii) of Theorem~\ref{acu} are satisfied for every composition sequence generated by $\mathcal{F}$, so $\text{dim}\, \Lambda(\mathcal{F})=1$.

Suppose now that at least one of the numbers $\gamma_f$ is greater than $1$, and at least one is less than $1$. Let $b=|\mathcal{F}|$. As before, we choose any one-to-one map $\phi$ from $\mathcal{F}$ to $\{0,\dots,b-1\}$.  Let $\gamma_0,\dots,\gamma_{b-1}$ be the positive numbers defined by $\gamma_{\phi(f)}=\gamma_f$, for $f\in\mathcal{F}$. Now define $\pi$ to be the projection from $\Omega(\mathcal{F})$ to $[0,1]$ that takes the sequence $(f_n)$ to the number $x$ with $n$th term $\phi(f_n)$ in its $b$-ary expansion. The map $\pi$ is one-to-one outside a countable subset of $\Omega(\mathcal{F})$, and one can prove using an elementary argument  about Hausdorff dimension (see \cite{Fa1986}*{Theorem~5.1}) that for any subset $A$ of $\Omega(\mathcal{F})$ we have $\text{dim}\, A= \text{dim}\, \pi(A)$.

Theorem~\ref{acu} tells us that $\Lambda(\mathcal{F})$  comprises those sequences $(f_n)$ for which $\sum_{n}\gamma_{f_1}\dotsb \gamma_{f_n}<+\infty$.  Let $\Lambda'(\mathcal{F})$ denote the class $\Lambda(\mathcal{F})$ minus any sequences $(f_n)$ for which $f_n=\phi^{-1}(b-1)$ for all sufficiently large values of $n$. The classes $\Lambda(\mathcal{F})$ and $\Lambda'(\mathcal{F})$ differ by a countable set, so they have the same Hausdorff dimension. The set $\pi(\Lambda'(\mathcal{F}))$ comprises those numbers $x$ in $[0,1]$ for which $\sum_n \gamma_{x(1)}\dotsb \gamma_{x(n)}<+\infty$. We can now apply Theorem~\ref{dit} to deduce that
\[
\text{dim}\, \Lambda(\mathcal{F})=\text{dim}\, \pi(\Lambda(\mathcal{F}))=  \min_{s \geq 0}  \frac{\log \left(\sum_{i=0}^{b-1} \gamma_{i}^{-s}\right)}{\log b}=\min_{s \geq 0}  \frac{\log \left(\sum_{f\in\mathcal{F}} \gamma_{f}^{-s}\right)}{\log b}.\qedhere
\]
\end{proof}

\section{Convergence of composition sequences}

In this section we prove Theorems~\ref{ibi} and~\ref{mwd}. Our proofs use the action of M\"obius transformations on $\mathbb{H}^3$, with hyperbolic metric $\rho$. Given a point $z=(x,y,t)$ in $\mathbb{H}^3$, we define $h(z)=t$: the `height' of $z$. Also, for a Euclidean disc $D$ in the complex plane with centre $c$ and radius $r$, we denote by $\Pi(D)$ the hyperbolic plane $\{z\in \mathbb{H}^3\,:\,|z-c|=r\}$. For brevity, we write $\Pi$ for the hyperbolic plane $\Pi(\mathbb{D})$.

\begin{lemma}\label{cul}
Let $U$ and $V$ be distinct Euclidean discs with centres $u$ and $v$, and radii $r$ and $s$, in that order. Suppose that $V\subset U$ and 
\begin{equation}\label{ksg}
\frac{r}{s} < \min\left\{2,1+\tfrac18\sinh \rho(z,\Pi(V))\right\},
\end{equation}
for some point $z$ in $\Pi(U)$.  Then $h(z)<s/2$. Let $w$ be the point in $\Pi(V)$ with $h(w)=h(z)$ that is closest in Euclidean distance to $z$. Then
\[
|z-w|<3(r-s).
\]
\end{lemma}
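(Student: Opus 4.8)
Write $z=(p,t)$, where $p\in\mathbb C$ is the horizontal projection of $z$ and $t=h(z)$. The key tool will be the standard formula for the hyperbolic distance from a point $\zeta=(\xi,\tau)$ of $\mathbb H^{3}$ to the hyperbolic plane $\Pi(D)$ over a Euclidean disc $D$ with centre $c$ and radius $\varrho$:
\[
\sinh\rho\big(\zeta,\Pi(D)\big)=\frac{\big|\,|\zeta-c|^{2}-\varrho^{2}\,\big|}{2\varrho\tau}.
\]
I would justify this by observing that the surfaces $|\zeta-c|^{2}-\varrho^{2}=2\lambda\varrho\tau$, one for each $\lambda\in\mathbb R$, are precisely the Euclidean spheres through $\partial D$, hence precisely the surfaces equidistant from $\Pi(D)$, and then evaluating the right-hand side along the vertical geodesic through $c$, which meets $\Pi(D)$ orthogonally. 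Next I would record two consequences of $U\neq V$ and $V\subset U$: first, $|u-v|\le r-s$ and $r>s$; second, the Euclidean ball $B(v,s)$ lies inside $B(u,r)$, so since $z\in\Pi(U)\subset\partial B(u,r)$ we have $|z-v|\ge s$, and hence $|p-v|^{2}=|z-v|^{2}-t^{2}\ge s^{2}-t^{2}$. From $z\in\Pi(U)$ we also have $|p-u|^{2}=r^{2}-t^{2}$.

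To prove $h(z)<s/2$, I would use the triangle inequality $|p-v|\le\sqrt{r^{2}-t^{2}}+(r-s)$ and expand, using $r^{2}-s^{2}+(r-s)^{2}=2r(r-s)$ and $\sqrt{r^{2}-t^{2}}\le r$, to get
\[
|z-v|^{2}-s^{2}=|p-v|^{2}+t^{2}-s^{2}\le 2r(r-s)+2(r-s)\sqrt{r^{2}-t^{2}}\le 4r(r-s).
\]
Since $|z-v|\ge s$, the distance formula then gives $\sinh\rho(z,\Pi(V))\le 2r(r-s)/(st)$, whereas \eqref{ksg} forces $\sinh\rho(z,\Pi(V))>8(r-s)/s$; comparing these and cancelling the positive factor $(r-s)/s$ yields $t<r/4$, and combining with $r<2s$ gives $t<s/2$.

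To bound $|z-w|$, I would note that, as $t<s/2<s$, the slice of $\Pi(V)$ at height $t$ is the circle of positive radius $\sqrt{s^{2}-t^{2}}$ centred at $v$; since $z$ and $w$ have the same height, $|z-w|$ equals the planar distance from $p$ to the nearest point of this circle, which, because $|p-v|\ge\sqrt{s^{2}-t^{2}}>0$, equals $|p-v|-\sqrt{s^{2}-t^{2}}$. Bounding $|p-v|$ as before and rationalising the difference of square roots,
\[
|z-w|\le(r-s)+\frac{r^{2}-s^{2}}{\sqrt{r^{2}-t^{2}}+\sqrt{s^{2}-t^{2}}}.
\]
Since $t<s/2\le r/2$, I have $\sqrt{r^{2}-t^{2}}>\tfrac{\sqrt3}{2}r$ and $\sqrt{s^{2}-t^{2}}>\tfrac{\sqrt3}{2}s$, so the fraction is at most $\tfrac{2}{\sqrt3}(r-s)$ and $|z-w|\le\big(1+\tfrac{2}{\sqrt3}\big)(r-s)<3(r-s)$.

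Every individual estimate here is routine arithmetic; the only delicate points are pinning down the distance-to-a-plane formula and spotting that $z$ lies outside the ball $B(v,s)$. That last observation removes the absolute value in the $\sinh$ formula and is what makes the clean inequality $|z-v|^{2}-s^{2}\le 4r(r-s)$ available, after which the constant $\tfrac18$ in \eqref{ksg} is exactly what is needed to drive $t$ below $s/2$; a cruder estimate of $|p-v|$ would require a larger constant there.
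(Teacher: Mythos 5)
Your proposal is correct and follows essentially the same route as the paper: the same $\sinh$-distance-to-plane formula, the same bound $|z-v|^2-s^2\leq 4r(r-s)$ compared against \eqref{ksg} to force $h(z)<r/4<s/2$, and the same horizontal-slice computation with the rationalised difference of square roots for $|z-w|$. The only differences are that you make explicit two points the paper leaves implicit (that $|z-v|\geq s$ so the absolute value in the distance formula can be dropped, and the ``elementary Euclidean geometry'' giving the slice estimate), and you obtain the marginally sharper constant $1+\tfrac{2}{\sqrt3}$ in place of the paper's $\tfrac{4}{\sqrt3}$ — both comfortably below $3$.
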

\begin{proof}
Using a standard formula for the hyperbolic metric (see, for example, \cite[Section 7.20]{Be1995}) we have
\[
\sinh \rho(z,\Pi(V)) =\frac{|z-v|^2-s^2}{2hs},
\]
where $h=h(z)$. As $V\subset U$, we see that $|u-v|+s\leq r$. Hence
\[
|z-v|^2-s^2 \leq (|z-u|+|u-v|)^2-s^2\leq (2r-s)^2-s^2=4r(r-s).
\] 
Therefore, using \eqref{ksg}, we obtain
\[
\sinh \rho(z,\Pi(V)) \leq \frac{2r(r-s)}{hs}=\frac{2r}{h}\left(\frac{r}{s}-1\right)<\frac{r}{4h}\sinh \rho(z,\Pi(V)).
\]
It follows that $h<r/4<s/2$. 

Now, let $\Sigma$ denote the Euclidean plane in $\mathbb{H}^3$ with height $h$, that is, $\Sigma=\{(x,y,h)\,:\, x,y\in\mathbb{R}\}$.  Let $U_0$ and $V_0$ denote the Euclidean discs in $\Sigma$ with centres $u+hj$ and $v+hj$ (where $j=(0,0,1)$), and radii $\sqrt{r^2-h^2}$ and $\sqrt{s^2-h^2}$, in that order. Observe that $V_0\subset U_0$ and $z\in \partial U_0$ and $w\in \partial V_0$. Using elementary Euclidean geometry, we can see that 
\[
|z-w|< 2\sqrt{r^2-h^2}-2\sqrt{s^2-h^2}.
\]
But $h<s/2$ and $h<r/2$, so
\[
2\sqrt{r^2-h^2}-2\sqrt{s^2-h^2}=\frac{2(r^2-s^2)}{\sqrt{r^2-h^2}+\sqrt{s^2-h^2}}<\frac{4}{\sqrt3}(r-s).
\]
Therefore $|z-w|<3(r-s)$, as required.
\end{proof}

We now prove Theorem~\ref{ibi}. 

\begin{proof}[Proof of Theorem~\ref{ibi}]
Since $\text{rad}(f_n(\mathbb{D}))<\delta$ for all $n$, it follows that each hyperbolic plane $f_n(\Pi)$ lies inside the region $t<\delta$ in $\mathbb{H}^3$. Hence $\rho(j,f_n(\Pi))> \rho(j,\delta j)= -\log\delta$. Let $\varepsilon=-\log\delta$, and define $\eta=\min\left\{2,1+\tfrac18\sinh\varepsilon\right\}$. Let $r_n$ be the Euclidean radius of $F_n(\mathbb{D})$ (with $r_0=1$).  Define
\[
A=\left\{n\in\mathbb{N}\,:\, \frac{r_{n-1}}{r_n}<\eta\right\}\quad\text{and}\quad B=\left\{n\in\mathbb{N}\,:\, \frac{r_{n-1}}{r_n}\geq \eta\right\}.
\]
Suppose that $n\in A$. Observe that $\rho(F_{n-1}(j),F_n(\Pi))= \rho(j,f_n(\Pi))>\varepsilon$, so
\[
\frac{r_{n-1}}{r_n}<\eta < \min\left\{2,1+\tfrac18\sinh \rho(F_{n-1}(j),F_n(\Pi))\right\}.
\]
By Lemma~\ref{cul}, we can choose a point $w_n$ in $F_n(\Pi)$ with $h(w_n)=h(F_{n-1}(j))$ that is closest in Euclidean distance to $F_{n-1}(j)$, and
\[
|F_{n-1}(j)-w_n| <3(r_{n-1}-r_n).
\]
Now, using a basic estimate of hyperbolic distance, we find that
\[
\varepsilon h(F_{n-1}(j)) < \rho(F_{n-1}(j),F_n(\Pi))h(F_{n-1}(j)) \leq \rho(F_{n-1}(j),w_n)h(F_{n-1}(j))\leq |F_{n-1}(j)-w_n|. 
\]
Hence $\varepsilon h(F_{n-1}(j)) < 3(r_{n-1}-r_n)$, so
\[
\sum_{n\in A} h(F_{n-1}(j))<+\infty.
\]

Next, let $n_1<n_2<\dotsb$ be the elements of $B$. Define $n_0=0$.  As $r_{n_k}/r_{n_{k-1}}\leq 1/\eta$, for $k=1,2,\dotsc$,  we see that $r_{n_k}\leq 1/\eta^k$. Therefore
\[
h(F_{n_k-1}(j)) \leq r_{n_k-1}\leq r_{n_{k-1}} \leq \frac{1}{\eta^{k-1}}, 
\]
so
\[
\sum_{n\in B} h(F_{n-1}(j))<+\infty.
\]

To finish, observe that $-\log h(F_{n-1}(j))\leq \rho(j,F_{n-1}(j))$, from which it follows that 
\[
\exp(-\rho(j,F_{n-1}(j)))\leq h(F_{n-1}(j)).
\]
 We have just seen that 
\[
\sum_{n=1}^\infty h(F_{n-1}(j))<+\infty, 
\]
and hence we conclude that $(F_n)$ is a rapid-escape sequence.
\end{proof}

Finally, we prove Theorem~\ref{mwd}.

\begin{proof}[Proof of Theorem~\ref{mwd}]
Let us begin by proving that the sequence $(F_n)$ converges ideally. In this part of the proof (and nowhere else in the paper) it is convenient to assume that the transformations $F_n$ act on the unit ball model of hyperbolic space, which we denote by $\mathbb{B}^3$. The ideal boundary of $\mathbb{B}^3$ is the unit sphere. We denote the hyperbolic metric on $\mathbb{B}^3$ by $\rho$. In this model we use the distinguished point $0$ (the origin) instead of the point $j$ in $\mathbb{H}^3$. Two standard formulas for the hyperbolic metric on $\mathbb{B}^3$ are 
\[
\rho(0,z) = \log \left(\frac{1+|z|}{1-|z|}\right)\quad\text{and}\quad \sinh \tfrac{1}{2} \rho(z,w) = \frac{|z-w|}{\sqrt{(1-|z|^2)(1-|w|^2)}}.
\]
From the first standard formula we see that  $1-|z|=(1+|z|)e^{-\rho(0,z)}$. Substituting this equation into the second standard formula, and noting that $1+|z|\leq 2$,  we obtain
\[
|z-w| = \sqrt{(1-|z|^2)(1-|w|^2)}\sinh \tfrac{1}{2} \rho(z,w)\leq 2(e^{-\rho(0,z)}+e^{-\rho(0,w)})\sinh \tfrac{1}{2}\rho(z,w).
\]
We apply this formula with $z=F_{n-1}(0)$ and $w=F_n(0)$. Let $k>0$ be such that $\rho(0,f_n(0))<k$ for all $n$ (the maps $f_n$ are chosen from a finite set). Then $\rho(F_{n-1}(0),F_n(0))<k$ for all $n$. Therefore
\[
|F_{n-1}(0)-F_n(0)| < 2(e^{-\rho(0,F_{n-1}(0))}+e^{-\rho(0,F_{n}(0))})\sinh \tfrac12 k.
\]
Theorem~\ref{ibi} tells us that $(F_n)$ is a rapid-escape sequence, so  $\sum |F_{n-1}(0)-F_n(0)| <+\infty$. It follows that $(F_n(0))$ converges in the Euclidean metric on $\overline{\mathbb{B}^3}$, the closed unit ball. This sequence cannot converge to a point in $\mathbb{B}^3$, because $(F_n)$ is an escaping sequence. Therefore $(F_n)$ converges ideally to a point on the ideal boundary, the unit sphere.

Let us now go back to thinking about $F_n$ acting on $\overline{\mathbb{C}}$ and $\mathbb{H}^3$. We have seen that $(F_n)$ converges ideally to a point $q$. This point $q$ must belong to $\overline{\mathbb{D}}$ because each transformation $f_n$ maps the closed hyperbolic half-space with ideal boundary $\overline{\mathbb{D}}$ inside itself.

Next we show that $(F_n)$ converges locally uniformly to $q$ on $\overline{\mathbb{D}}\setminus X$. To prove this, we begin with the observation (see \cite[Theorem~4.6]{Ae1990}) that if $(G_n)$ is a sequence of M\"obius transformations that converges ideally to a point $u$, then $(G_n)$ converges locally uniformly to $u$ on the complement in $\overline{\mathbb{C}}$ of its backward limit set (the backward limit set of $(G_n)$ is the set of accumulation points, in the chordal metric, of the sequence $(G_n^{-1}(j))$). In our case, each transformation $f_n^{-1}$ maps the complement of the unit disc, namely $\overline{\mathbb{C}}\setminus \mathbb{D}$, inside itself, so it also maps the closed hyperbolic half-space in $\mathbb{H}^3$ with ideal boundary $\overline{\mathbb{C}}\setminus \mathbb{D}$ (which includes the point $j$) inside itself. It follows that the backward limit set of $(F_n)$ is contained in $\overline{\mathbb{C}}\setminus \mathbb{D}$. Therefore $(F_n)$ converges locally uniformly to $q$ on $\mathbb{D}$.

Now choose a point $y$ in $\partial\mathbb{D}\setminus X$. Let 
\[
\mathcal{G} = \{f\in\mathcal{F}\,:\,\text{$f_n=f$ for infinitely many integers $n$}\}.
\]
Let $K$ be a compact disc in $\mathbb{D}$ whose interior contains the finite set $\{f(y)\,:\, f\in\mathcal{G}\}$. Let $E$ be a closed Euclidean disc in the complement of $X$ that contains the point $y$ in its interior and is such that $f(E)\subset K$ for all $f\in\mathcal{G}$. We can choose $m$ to be a sufficiently large positive integer that if $n\geq m$ then $f_n\in\mathcal{G}$. Therefore $f_n(E)\subset K$, so the sequence $(F_n)$ converges uniformly to $q	$ on $E$. Hence $(F_n)$ converges locally uniformly to $q$ on $\overline{\mathbb{D}}\setminus X$.

Let us finish by considering convergence of $(F_n)$ on the set $X$ itself. Choose $x\in X$. If $(F_n)$ is of limit-point type, then $(F_n)$ converges uniformly to $q$ on $\overline{\mathbb{D}}$, so $F_n(x)\to q$ as $n\to\infty$. Suppose now that $(F_n)$ is of limit-disc type. We must show that $(F_n(x))$ does not converge to $q$. By omitting the first $m$ terms from the sequence $(f_n)$, for some positive integer $m$, and by adjusting $q$ accordingly, we can assume that $f_n(z_n)=z_{n-1}$, for $n=1,2,\dotsc$, where $z_n=\alpha_{f_n}$, for $n\geq 1$, and $z_0$ is a point common to the boundaries of all the discs $F_n(\mathbb{D})$. Lemma~\ref{abc} tells us that $z_0\neq q$. However, we know that $z_n=x$ for infinitely many positive integers $n$, and $F_n(z_n)=z_0$; therefore $(F_n(x))$ does not converge to $q$. 
\end{proof}

\begin{bibdiv}
\begin{biblist}

\bib{Ae1990}{article}{
   author={Aebischer, Beat},
   title={The limiting behavior of sequences of M\"obius transformations},
   journal={Math. Z.},
   volume={205},
   date={1990},
   number={1},
   pages={49--59},
   issn={0025-5874},
   review={\MR{1069484 (91i:51007)}},
   doi={10.1007/BF02571224},
}

\bib{BaRi1990}{article}{
   author={Baker, I. N.},
   author={Rippon, P. J.},
   title={On compositions of analytic self-mappings of a convex domain},
   journal={Arch. Math. (Basel)},
   volume={55},
   date={1990},
   number={4},
   pages={380--386},
   issn={0003-889X},
   review={\MR{1076065}},
   doi={10.1007/BF01198477},
}

\bib{Be1995}{book}{
   author={Beardon, Alan F.},
   title={The geometry of discrete groups},
   series={Graduate Texts in Mathematics},
   volume={91},
   note={Corrected reprint of the 1983 original},
   publisher={Springer-Verlag, New York},
   date={1995},
   pages={xii+337},
   isbn={0-387-90788-2},
   review={\MR{1393195 (97d:22011)}},
}

\bib{Be2001}{article}{
   author={Beardon, Alan F.},
   title={Continued fractions, discrete groups and complex dynamics},
   journal={Comput. Methods Funct. Theory},
   volume={1},
   date={2001},
   number={2},
   pages={535--594},
   issn={1617-9447},
   review={\MR{1941142 (2003m:30010)}},
   doi={10.1007/BF03321006},
}

\bib{Be2003}{article}{
   author={Beardon, Alan F.},
   title={The Hillam--Thron theorem in higher dimensions},
   journal={Geom. Dedicata},
   volume={96},
   date={2003},
   pages={205--209},
   issn={0046-5755},
   review={\MR{1956841 (2003m:40007)}},
   doi={10.1023/A:1022116901444},
}

\bib{BeCaMiNg2004}{article}{
   author={Beardon, A. F.},
   author={Carne, T. K.},
   author={Minda, D.},
   author={Ng, T. W.},
   title={Random iteration of analytic maps},
   journal={Ergodic Theory Dynam. Systems},
   volume={24},
   date={2004},
   number={3},
   pages={659--675},
   issn={0143-3857},
   review={\MR{2060992}},
   doi={10.1017/S0143385704000033},
}

\bib{Bi1965}{book}{
   author={Billingsley, Patrick},
   title={Ergodic theory and information},
   publisher={John Wiley \& Sons, Inc., New York-London-Sydney},
   date={1965},
   pages={xiii+195},
   review={\MR{0192027}},
}

\bib{CrSh2007}{article}{
   author={Crane, Edward},
   author={Short, Ian},
   title={Conical limit sets and continued fractions},
   journal={Conform. Geom. Dyn.},
   volume={11},
   date={2007},
   pages={224--249 (electronic)},
   issn={1088-4173},
   review={\MR{2354097 (2008i:30002)}},
   doi={10.1090/S1088-4173-07-00169-5},
}

\bib{Fa1986}{book}{
   author={Falconer, K. J.},
   title={The geometry of fractal sets},
   series={Cambridge Tracts in Mathematics},
   volume={85},
   publisher={Cambridge University Press, Cambridge},
   date={1986},
   pages={xiv+162},
   isbn={0-521-25694-1},
   isbn={0-521-33705-4},
   review={\MR{867284}},
}
                                 
\bib{Fa1997}{book}{
   author={Falconer, Kenneth},
   title={Techniques in fractal geometry},
   publisher={John Wiley \& Sons, Ltd., Chichester},
   date={1997},
   pages={xviii+256},
   isbn={0-471-95724-0},
   review={\MR{1449135}},
}

\bib{Fr1998}{article}{
   author={Friedland, Shmuel},
   title={Computing the Hausdorff dimension of subshifts using matrices},
   journal={Linear Algebra Appl.},
   volume={273},
   date={1998},
   pages={133--167},
   issn={0024-3795},
   review={\MR{1491602 (99c:28017)}},
}

\bib{JaSh2016}{article}{
   author={Jacques, Matthew},
   author={Short, Ian},
   title={Continued fractions and semigroups of M\"obius transformations},
   eprint={http://arxiv.org/abs/1609.00576}
}

\bib{KeLa2006}{article}{
   author={Keen, Linda},
   author={Lakic, Nikola},
   title={Random holomorphic iterations and degenerate subdomains of the
   unit disk},
   journal={Proc. Amer. Math. Soc.},
   volume={134},
   date={2006},
   number={2},
   pages={371--378 (electronic)},
   issn={0002-9939},
   review={\MR{2176004}},
   doi={10.1090/S0002-9939-05-08280-8},
}

\bib{Lo1990}{article}{
   author={Lorentzen, Lisa},
   title={Compositions of contractions},
   note={Extrapolation and rational approximation (Luminy, 1989)},
   journal={J. Comput. Appl. Math.},
   volume={32},
   date={1990},
   number={1-2},
   pages={169--178},
   issn={0377-0427},
   review={\MR{1091787}},
   doi={10.1016/0377-0427(90)90428-3},
}

\bib{Lo2007}{article}{
   author={Lorentzen, Lisa},
   title={M\"obius transformations mapping the unit disk into itself},
   journal={Ramanujan J.},
   volume={13},
   date={2007},
   number={1-3},
   pages={253--263},
   issn={1382-4090},
   review={\MR{2281165 (2007j:37073)}},
   doi={10.1007/s11139-006-0251-3},
}

\bib{LoWa2008}{book}{
   author={Lorentzen, L.},
   author={Waadeland, H.},
   title={Continued fractions. Vol. 1},
   series={Atlantis Studies in Mathematics for Engineering and Science},
   volume={1},
   edition={2},
   note={Convergence theory},
   publisher={Atlantis Press, Paris; World Scientific Publishing Co. Pte.
   Ltd., Hackensack, NJ},
   date={2008},
   pages={xii+308},
   isbn={978-90-78677-07-9},
   review={\MR{2433845 (2009b:30005)}},
   doi={10.2991/978-94-91216-37-4},
}

\bib{Sh2012}{article}{
   author={Short, Ian},
   title={Hausdorff dimension of sets of divergence arising from continued
   fractions},
   journal={Proc. Amer. Math. Soc.},
   volume={140},
   date={2012},
   number={4},
   pages={1371--1385},
   issn={0002-9939},
   review={\MR{2869121}},
   doi={10.1090/S0002-9939-2011-11032-3},
}

\end{biblist}
\end{bibdiv}

\end{document}